\numberwithin{equation}{section}
\numberwithin{figure}{section}
\theoremstyle{plain}
\newtheorem{thm}{Theorem}[section]
\theoremstyle{plain}
\newtheorem{prop}[thm]{Proposition}
\theoremstyle{remark}
\newtheorem{rem}[thm]{Remark}
\theoremstyle{plain}
\newtheorem{cor}[thm]{Corollary}
\theoremstyle{plain}
\theoremstyle{plain}
\newtheorem{lem}[thm]{Lemma}
\theoremstyle{definition}
\newtheorem{example}[thm]{Example}
\theoremstyle{definition}
\newtheorem{defn}[thm]{Definition}
\renewcommand{\S}{\mathcal{S}}
\newcommand{\M}{\mathcal{M}}
\newcommand{\id}{\textnormal{id}}
\renewcommand{\G}{\mathbb{G}}
\newcommand{\F}{\mathcal{F}}
\newcommand{\m}{\mathfrak{m}}
\newcommand{\n}{\mathfrak{n}}
\newcommand{\I}{\mathcal{I}}
\newcommand{\T}{\mathbb{T}}
\newcommand{\Z}{\mathbb{Z}}
\newcommand{\Irr}{\mathrm{Irr}}
\newcommand{\Pol}{\mathrm{Pol}}
\newcommand{\tr}{\text{Tr}}
\newlength{\dhatheight}
\begin{document}
	
	\title{$L_p$-$L_q$ Fourier multipliers on locally compact quantum groups}
	\author{Haonan Zhang}

	\address{Institute of Science and Technology Austria (IST Austria),
		Am Campus 1, 3400 Klosterneuburg, Austria}
	
\email{haonan.zhang@ist.ac.at}
	
	
	\keywords{Fourier multiplier, Schur multiplier, locally compact quantum groups, noncommutative $L_p$-spaces, noncommutative Lorentz spaces, Hausdorff--Young inequality, interpolation}
	
	\maketitle
	
	\begin{abstract}
		Let $\mathbb{G}$ be a locally compact quantum group with dual $\widehat{\mathbb{G}}$. Suppose that the left Haar weight $\varphi$ and the dual left Haar weight $\widehat{\varphi}$ are tracial, e.g. $\mathbb{G}$ is a unimodular Kac algebra. We prove that for $1<p\le 2 \le q<\infty$, the Fourier multiplier $m_{x}$	is bounded from $L_p(\widehat{\mathbb{G}},\widehat{\varphi})$ to $L_q(\widehat{\mathbb{G}},\widehat{\varphi})$ whenever the symbol $x$ lies in $L_{r,\infty}(\mathbb{G},\varphi)$, where $1/r=1/p-1/q$. Moreover, we have 
		\begin{equation*}
		\|m_{x}:L_p(\widehat{\mathbb{G}},\widehat{\varphi})\to L_q(\widehat{\mathbb{G}},\widehat{\varphi})\|\le c_{p,q} \|x\|_{L_{r,\infty}(\mathbb{G},\varphi)},
		\end{equation*}
		where $c_{p,q}$ is a constant depending only on $p$ and $q$. This was first proved by H\"ormander \cite{Hormander1960} for $\mathbb{R}^n$, and was recently extended to more general groups and quantum groups. Our work covers all these results and the proof is simpler. In particular, this also yields a family of $L_p$-Fourier multipliers over discrete group von Neumann algebras. A similar result for $\mathcal{S}_p$-$\mathcal{S}_q$ Schur multipliers is also proved.
		
	\end{abstract}
	
	\section{Introduction}
	
	For a nice function $f:\mathbb{R}^n\to \mathbb{C}$, let $\widehat{f}$ denote its Fourier transform, i.e.
	$$\widehat{f}(\xi)=\int_{\mathbb{R}^n}f(x)e^{-2\pi i\langle x,\xi\rangle}dx,$$ 
	where $\langle\cdot,\cdot\rangle$ denotes the Euclidean inner product on $\mathbb{R}^n$.
	Given $p,q>0$ and $\phi:\mathbb{R}^n\to\mathbb{C}$, the operator $m_{\phi}$ defined via 
	\begin{equation*}
	\widehat{m_{\phi} f}(\xi)=\phi(\xi)\widehat{f}(\xi),~~\xi\in\mathbb{R}^n,
	\end{equation*}
	is called an $L_p$-$L_q$ \emph{Fourier multiplier} if it is bounded from $L_p(\mathbb{R}^n)$ to $L_q(\mathbb{R}^n)$. When $p=q$, it is called an $L_p$-\emph{Fourier multiplier}  for short. The function $\phi$ is called the \emph{symbol} of the Fourier multiplier $m_\phi$.
	
	 H\"ormander proved the following $L_p$-$L_q$ Fourier multipliers theorem:
	
	\begin{thm}\cite[Theorem 1.11]{Hormander1960}\label{thm:Hormander's multiplier}
		Let $1<p\leq 2\leq q<\infty$ and $1/r=1/p-1/q$. Then we have
		\begin{equation*}
		\Vert m_{\phi}:L_p(\mathbb{R}^n)\to L_q(\mathbb{R}^n)\Vert\precsim_{p,q}\Vert \phi\Vert_{L_{r,\infty}(\mathbb{R}^n)}.
		\end{equation*}
	\end{thm}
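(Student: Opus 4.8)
The plan is to prove this classical result by complex interpolation between the endpoint cases, where the $L_{r,\infty}$ condition arises naturally from interpolating bounded families of operators indexed by the weak-$L_r$ decomposition of the symbol. First I would recall the two endpoint facts that make the mechanism work. At the corner $p=q=2$, Plancherel's theorem tells us that $m_\phi$ is bounded on $L_2(\mathbb{R}^n)$ precisely when $\phi\in L_\infty(\mathbb{R}^n)$, with $\|m_\phi:L_2\to L_2\|=\|\phi\|_{L_\infty}$. At the opposite corner, namely $p=1$, $q=\infty$, the Hausdorff--Young inequality gives that $m_\phi$ maps $L_1(\mathbb{R}^n)$ to $L_\infty(\mathbb{R}^n)$ with norm controlled by $\|\phi\|_{L_\infty}$, since $\widehat{m_\phi f}=\phi\widehat{f}$ and $\|\widehat{g}\|_{L_\infty}\le\|g\|_{L_1}$ together with $\|f\|_{L_1}$-to-$\|\widehat f\|_{L_\infty}$ control.

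The core of the argument is a real-interpolation / distributional decomposition of the symbol. I would split $\phi$ at height $\lambda>0$ according to its distribution function, writing $\phi=\phi_\lambda^{\mathrm{small}}+\phi_\lambda^{\mathrm{big}}$ where $\phi_\lambda^{\mathrm{small}}=\phi\,\mathds{1}_{\{|\phi|\le\lambda\}}$ is bounded by $\lambda$ and hence gives an $L_2\to L_2$ multiplier of norm $\le\lambda$, while $\phi_\lambda^{\mathrm{big}}=\phi\,\mathds{1}_{\{|\phi|>\lambda\}}$ lies in $L_s$ for every $s>r$ with a norm estimate governed by $\|\phi\|_{L_{r,\infty}}$. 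The hypothesis $\phi\in L_{r,\infty}$ is exactly what controls both the measure of the level sets and the integrability of the large part. Using that the target estimate interpolates the $L_2\to L_2$ bound with the $L_1\to L_\infty$ (Hausdorff--Young) bound, each piece is placed on the correct side of the scale, and summing the contributions over a dyadic sequence of heights $\lambda$ yields the claimed $L_p\to L_q$ bound with a constant $c_{p,q}$ depending only on $p$ and $q$ through the convergence of the resulting geometric series.

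A cleaner route, which I would actually prefer to carry out, is to invoke the \emph{weak-type interpolation of the multiplier map itself}. Consider the (nonlinear) correspondence $\phi\mapsto m_\phi$ as acting between spaces: the assignment is bounded from $L_\infty$ to $\mathcal{B}(L_2)$ and, by Hausdorff--Young applied twice, is bounded from $L_{p'}$ to $\mathcal{B}(L_p,L_{p'})$ for $1<p\le 2$. Interpolating these two endpoint statements by the real method and identifying the interpolation space $(L_\infty,L_{p'})_{\theta,\infty}$ with a Lorentz space $L_{r,\infty}$ for the appropriate $\theta$ gives the boundedness of $m_\phi$ from $L_p$ to $L_q$ whenever $\phi\in L_{r,\infty}$, with $1/r=1/p-1/q$. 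The emergence of the \emph{weak} Lorentz space $L_{r,\infty}$ rather than $L_r$ is the signature of taking the $(\cdot,\cdot)_{\theta,\infty}$ endpoint of the real interpolation functor, which is precisely why the hypothesis is stated in $L_{r,\infty}$.

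The main obstacle will be keeping the interpolation bookkeeping honest: one must verify that the operator $m_\phi$ depends on $\phi$ in a way compatible with the interpolation scheme (it is linear in $\phi$ for fixed $f$, which is what one exploits in the distributional decomposition), and one must correctly identify the real interpolation space between the $L_\infty$ and $L_{p'}$ endpoints as the Lorentz space $L_{r,\infty}$ with the right exponent. I expect the technical heart to be this identification of the interpolation space together with tracking the dependence of the constant on $p,q$; once that is pinned down, the endpoint estimates (Plancherel and Hausdorff--Young) are routine. In the quantum-group setting treated later in the paper, the same skeleton should survive verbatim, with $L_\infty(\mathbb{R}^n)$ replaced by the von Neumann algebra $L_\infty(\mathbb{G})$, the Fourier transform replaced by the appropriate Fourier transform on $\mathbb{G}$, and the Lorentz spaces replaced by their noncommutative analogues $L_{r,\infty}(\mathbb{G},\varphi)$, the tracial hypothesis ensuring that Plancherel and Hausdorff--Young remain available.
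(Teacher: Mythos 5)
Both of your routes rest on endpoint estimates that are false as stated. The claimed corner ``$m_\phi:L_1\to L_\infty$ with norm controlled by $\|\phi\|_{L_\infty}$'' fails: take $\phi\equiv 1$, so that $m_\phi=\mathrm{id}$, which certainly does not map $L_1$ into $L_\infty$. The correct corner estimate, coming from $\|m_\phi f\|_{L_\infty}\le\|\phi\widehat f\|_{L_1}\le\|\phi\|_{L_1}\|\widehat f\|_{L_\infty}\le\|\phi\|_{L_1}\|f\|_{L_1}$, requires the symbol to be \emph{integrable}, not bounded. Likewise, the assertion that $\phi\mapsto m_\phi$ is bounded from $L_{p'}$ into $\mathcal{B}(L_p,L_{p'})$ is wrong except at $p=3/2$: applying Hausdorff--Young twice gives $\|m_\phi f\|_{p'}\le\|\phi\widehat f\|_{p}\le\|\phi\|_{L_s}\|\widehat f\|_{p'}$ with $1/s=1/p-1/p'=2/p-1$, so the correct symbol space is $L_s$, and $s=p'$ only when $3/p=2$ (at $p=2$ your claim would say $\phi\in L_2$ suffices for $L_2$-boundedness, which is false). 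Your decomposition also has the inclusions backwards: for $\phi\in L_{r,\infty}$, the \emph{large} part $\phi\,\chi_{\{|\phi|>\lambda\}}$ lies in $L_s$ for $s<r$ (with $\|\phi\,\chi_{\{|\phi|>\lambda\}}\|_{L_s}^s\precsim_{r,s}\lambda^{s-r}\|\phi\|_{r,\infty}^r$), while the small part lies in $L_s$ for $s>r$.

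Beyond the misstated endpoints, your preferred ``cleaner route'' has a structural gap: real interpolation of the linear assignment $\phi\mapsto m_\phi$ only places $m_\phi$ in $\bigl(\mathcal{B}(L_2,L_2),\mathcal{B}(L_{p_1},L_{q_1})\bigr)_{\theta,\infty}$, and spaces of bounded operators do not form a couple whose $(\theta,\infty)$-space embeds into $\mathcal{B}(L_p,L_q)$; what this membership actually yields is the restricted weak-type bound $m_\phi:L_{p,1}\to L_{q,\infty}$, after which one must run a second, Marcinkiewicz-type interpolation in the function variable $f$ at perturbed exponent pairs (possible since $p<q$) to recover strong type. Moreover, interpolating only between the two corners $(2,2)$ and $(1,\infty)$ reaches only the conjugate line $1/p+1/q=1$; covering all of $1<p\le 2\le q<\infty$ requires further steps. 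Repaired in this way, your scheme is essentially H\"ormander's original proof. The paper's proof is genuinely different and bypasses all of this bookkeeping: it never decomposes the symbol and never interpolates operator spaces. Instead it sharpens Hausdorff--Young by real interpolation of the \emph{Fourier transform itself}, giving $\|\F(y)\|_{p'}\precsim_p\|y\|_{p,p'}$ and dually $\|y\|_{p',p}\precsim_p\|\F(y)\|_p$, and concludes in one line via the Lorentz--H\"older inequality and the nesting of Lorentz norms: with $1/q'=1/r+1/p'$,
\begin{equation*}
\|\F(xy)\|_q\precsim_q\|xy\|_{q',q}\precsim_q\|x\|_{r,\infty}\|y\|_{p',q}\precsim_{p,q}\|x\|_{r,\infty}\|y\|_{p',p}\precsim_{p,q}\|x\|_{r,\infty}\|\F(y)\|_p.
\end{equation*}
Here $L_{r,\infty}$ enters through H\"older against the $(\theta,p')$- and $(\theta,p)$-refined Hausdorff--Young inequalities, not through a $(\theta,\infty)$-interpolation of the multiplier map, and this is precisely why the argument transfers verbatim to the tracial quantum-group setting, where the real-interpolation identity $(L_{p_0}(\M),L_{p_1}(\M))_{\theta,q}=L_{p,q}(\M)$ is available but a symbol-decomposition machinery is never needed.
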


	Here $L_{p,\infty}(\mathbb{R}^n)$ denotes the usual weak $L_p$-space. Throughout this paper, $C_1\precsim C_2$ always means $C_1\leq c C_2$ for some positive constant $c<\infty$. We write $C_1\precsim_p C_2$ if the constant $c=c_p$ is dependent of $p$. To prove Theorem \ref{thm:Hormander's multiplier}, H\"ormander used the following Paley-type inequalities.
	
	\begin{thm}\cite[Theorem 1.10]{Hormander1960}\label{thm:Hormander's Paley}
	For $1<p\leq2$ and  $1/s=2/p-1$, we have
	\begin{equation*}
	\Vert f\widehat{g}\Vert_{L_{p}(\mathbb{R}^n)}\precsim_{p} \Vert f\Vert_{L_{s,\infty}(\mathbb{R}^n)}\Vert g\Vert_{L_{p}(\mathbb{R}^n)}.
	\end{equation*}
	\end{thm}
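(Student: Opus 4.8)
The plan is to recast Theorem \ref{thm:Hormander's Paley} as a weighted Paley inequality and then prove the latter by real interpolation after folding the weight into the underlying measure. By homogeneity I may assume $\|f\|_{L_{s,\infty}(\mathbb{R}^n)}=1$, and by density it suffices to treat $g\in L_1(\mathbb{R}^n)\cap L_2(\mathbb{R}^n)$. Put $w:=|f|^s\ge 0$. Since $1/s=2/p-1$ gives $p/s=2-p$, raising the asserted inequality to the $p$-th power turns it into
\begin{equation*}
\int_{\mathbb{R}^n}|\widehat g(\xi)|^p\,w(\xi)^{2-p}\,d\xi\precsim_p \|w\|_{L_{1,\infty}(\mathbb{R}^n)}^{\,2-p}\,\|g\|_{L_p(\mathbb{R}^n)}^p,
\end{equation*}
where I used $\|w\|_{L_{1,\infty}}=\|f\|_{L_{s,\infty}}^s$, which is immediate from the definition of the Lorentz quasinorm after the substitution $t=\lambda^s$ in the distribution function. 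Thus $w$ becomes a fixed nonnegative weight with $\|w\|_{L_{1,\infty}}=1$, i.e. $|\{w>t\}|\le 1/t$ for all $t>0$, and the whole theorem reduces to this single weighted estimate for $1<p<2$; the case $p=2$, where $s=\infty$, is the trivial bound $\|f\widehat g\|_2\le\|f\|_\infty\|g\|_2$ from Plancherel.

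The key device is to absorb $w$ into the measure so as to obtain a $p$-independent operator amenable to Marcinkiewicz interpolation. I introduce the measure $d\nu:=w^2\,d\xi$ on the Fourier side and the single linear map $Tg:=\widehat g/w$. A direct substitution gives $\int|\widehat g|^p w^{2-p}\,d\xi=\int|Tg|^p\,d\nu=\|Tg\|_{L_p(\nu)}^p$, so the displayed inequality is exactly the bound $\|T:L_p(\mathbb{R}^n,d\xi)\to L_p(\mathbb{R}^n,\nu)\|\precsim_p 1$. I then establish the two endpoint estimates for $T$ between these fixed measure spaces. At the $L_2$ end, Plancherel gives an isometry: $\|Tg\|_{L_2(\nu)}^2=\int|\widehat g|^2 w^{-2}w^2\,d\xi=\|\widehat g\|_2^2=\|g\|_2^2$. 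At the $L_1$ end I claim the weak-type bound $\nu(\{|Tg|>\lambda\})\precsim \|g\|_1/\lambda$: since $\|\widehat g\|_\infty\le\|g\|_1$ we have $\{|Tg|>\lambda\}\subseteq\{w<\|g\|_1/\lambda\}$, and with $a:=\|g\|_1/\lambda$ the layer-cake formula together with $|\{w>t\}|\le1/t$ yields
\begin{equation*}
\nu(\{w<a\})=\int_{\{w<a\}}w^2\,d\xi=\int_0^a 2t\,|\{t<w<a\}|\,dt\le\int_0^a 2t\cdot\tfrac1t\,dt=2a,
\end{equation*}
which is the desired weak-$(1,1)$ estimate. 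Marcinkiewicz interpolation between the weak-$(1,1)$ and the strong (hence weak) $(2,2)$ bounds then produces $\|T:L_p\to L_p(\nu)\|\precsim_p 1$ for every $1<p<2$, with a constant depending only on $p$, completing the reduction.

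The point I expect to be the crux is precisely the appearance of the \emph{weak} Lorentz space $L_{s,\infty}$ on the symbol. Naively combining the Hausdorff--Young inequality $\widehat g\in L_{p'}$ with H\"older in Lorentz spaces only places $f\widehat g$ in $L_{p,p'}\supsetneq L_p$ (the correct first index but a too-large second index), and complex interpolation between the corners $(L_\infty,L_2)\to L_2$ and $(L_1,L_1)\to L_1$ would only yield the weaker conclusion with the \emph{strong} norm $\|f\|_{L_s}$ in place of $\|f\|_{L_{s,\infty}}$. Gaining the weak norm is genuinely a real-interpolation phenomenon, and the entire strength of the hypothesis is consumed in the one-line weak-$(1,1)$ computation above, where $|\{w>t\}|\le 1/t$ is used. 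The remaining care is routine: restricting to $\{f\neq0\}=\{w>0\}$ so that $Tg=\widehat g/w$ is well defined (the integrand vanishes off this set), checking $\sigma$-finiteness of $\nu$, and extending from the dense class $L_1\cap L_2$ to all of $L_p$.
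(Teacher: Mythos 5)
Your proof is correct, but it takes a genuinely different route from the paper: it is essentially H\"ormander's original argument --- the very one this paper set out to streamline, and the one followed in the generalizations of Akylzhanov--Nursultanov--Ruzhansky. You fold the symbol into the measure: with $w=|f|^s$ normalized so that $\|w\|_{L_{1,\infty}}=1$, you prove a weak-$(1,1)$ and a strong $(2,2)$ bound for the single operator $Tg=\widehat{g}/w$ from $L_p(d\xi)$ to $L_p(w^2\,d\xi)$ and apply Marcinkiewicz interpolation; the entire hypothesis is spent in your layer-cake estimate $\nu(\{w<a\})\le 2a$, and all the steps check out (the reduction $p/s=2-p$, the identity $\|w\|_{1,\infty}=\|f\|_{s,\infty}^s$, the inclusion $\{|Tg|>\lambda\}\subseteq\{0<w<\|g\|_1/\lambda\}$, the separate treatment of $p=2$). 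The paper instead interpolates the Fourier transform itself, once and for all: real interpolation of $\F:L_1\to L_\infty$ against $\F:L_2\to L_2$ gives the Lorentz-sharpened Hausdorff--Young bound $\|\F(x)\|_{L_{p',p}}\precsim_p\|x\|_{L_p}$ --- exactly the second-index upgrade you correctly identify as the crux --- after which the symbol enters only through the Lorentz--H\"older inequality \eqref{ineq:Holder}, giving $\|a\F(x)\|_{p,p}\precsim_p\|a\|_{s,\infty}\|\F(x)\|_{p',p}$ in two lines (see the proof of Theorem \ref{thm:main result-Paley type}). What each approach buys: yours is elementary and self-contained (Plancherel, $\|\widehat{g}\|_\infty\le\|g\|_1$, layer cake, Marcinkiewicz), and its weight-into-measure device is what the earlier group and quantum-group extensions ran on; but the interpolation must be redone for each fixed symbol, and manufacturing the measure $w^2\,d\xi$ from spectral data of $|f|$ is precisely the step that becomes cumbersome in the von Neumann algebraic setting. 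The paper's factorization performs one abstract interpolation of $\F$ and then needs only H\"older, so it transfers verbatim to tracial locally compact quantum groups and, bypassing Paley-type inequalities entirely, yields the $L_p$-$L_q$ multiplier theorem by the same two-step pattern. One cosmetic slip: after restricting to $\{w>0\}$, your $L_2$ endpoint is a contraction rather than an isometry, since $\|Tg\|_{L_2(\nu)}^2=\int_{\{w>0\}}|\widehat{g}|^2\,d\xi\le\|g\|_2^2$; of course only $\le$ is needed.
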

	
	Both theorems have been generalized to compact Lie groups by Akylzhanov, Nursultanov and Ruzhansky \cite{Ruzhansky2016liegroup}, to locally compact separable unimodular groups by Akylzhanov and Ruzhansky \cite{Ruzhansky2016locallycompact} and to compact quantum groups of Kac type by Akylzhanov, Majid and Ruzhansky \cite{Ruzhansky2018compactquantumgroup}. Theorem \ref{thm:Hormander's Paley} for compact quantum groups of Kac type was also shown by Youn \cite{Sang-Gyun2018hardylittlewood}. All their proofs go back to H\"ormander \cite{Hormander1960}. 

	 Our first  result is a generalization of Theorem \ref{thm:Hormander's multiplier} to locally compact quantum groups $\G$ whose left Haar weight $\varphi$ and the dual left Haar weight $\widehat{\varphi}$ are both tracial. Our proof is slightly simpler and does not require Paley-type inequalities.  
	
	\begin{thm}\label{thm:main reulst Lp-Lq multiplier}
		Let $1<p\leq2\leq q<\infty$. Let $\G=(\M,\Delta,\varphi,\psi)$ be a locally compact quantum group with its dual $\widehat{\G}=(\widehat{\M},\widehat{\Delta},\widehat{\varphi},\widehat{\psi})$. Suppose that $\varphi$ and $\widehat{\varphi}$ are both tracial. Then for each $x\in L_{r,\infty}(\G,\varphi)$ with $1/r=1/p-1/q$, $m_x$ is an $L_p$-$L_q$ Fourier multiplier such that
		$$\| m_x:L_p(\widehat{\G},\widehat{\varphi})\to L_q(\widehat{\G},\widehat{\varphi})\|
		\precsim_{p,q}\| x\|_{L_{r,\infty}(\G,\varphi)}.$$
	\end{thm}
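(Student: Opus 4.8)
The plan is to realize the Fourier multiplier as a composition of two Hausdorff--Young maps with a pointwise multiplication in between, to prove the corresponding strong-type estimate by a bare-hands Hölder argument, and then to upgrade the integrability hypothesis from $L_r$ to the weak space $L_{r,\infty}$ by a single real interpolation; no Paley-type inequality enters. First I would record the Hausdorff--Young inequality in the present tracial setting: the Fourier transform $\F$ carrying functions on $\widehat{\G}$ to functions on $\G$ is a contraction $L_1(\widehat{\G},\widehat{\varphi})\to L_\infty(\G,\varphi)$ and, by Plancherel, an isometry at the $L_2$-level, so complex interpolation gives a contraction $\F\colon L_p(\widehat{\G},\widehat{\varphi})\to L_{p'}(\G,\varphi)$ for $1\le p\le 2$, and symmetrically on $\G$. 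Traciality of $\varphi$ and $\widehat{\varphi}$ is exactly what makes these noncommutative $L_p$-spaces the standard semifinite ones, so the product of two such elements obeys the usual noncommutative Hölder inequality. The defining relation of the multiplier expresses $\F(m_x a)$ as the product of the symbol $x$ with $\F(a)$, so that $m_x=\F^{-1}\circ M_x\circ\F$, where $M_x$ denotes multiplication by $x$.

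For a symbol in the \emph{strong} space $L_r(\G,\varphi)$ this already yields the theorem on the full range. For $a\in L_p(\widehat{\G})$ one has $\F(a)\in L_{p'}(\G)$ with $\|\F(a)\|_{p'}\le\|a\|_p$; Hölder gives $\|x\,\F(a)\|_{q'}\le\|x\|_r\|\F(a)\|_{p'}$ because $1/q'=1/r+1/p'$; and a second application of Hausdorff--Young, legitimate since $q'\le 2$, gives $\|m_x a\|_q\le\|x\,\F(a)\|_{q'}$. Hence
\begin{equation*}
\|m_x:L_p(\widehat{\G},\widehat{\varphi})\to L_q(\widehat{\G},\widehat{\varphi})\|\le\|x\|_{L_r(\G,\varphi)},\qquad 1<p\le 2\le q<\infty,\ \tfrac1r=\tfrac1p-\tfrac1q.
\end{equation*}

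The substance is to replace $\|x\|_{L_r}$ by the smaller quantity $\|x\|_{L_{r,\infty}}$, and here I would apply bilinear real interpolation to the map $T(x,a)=m_x a$. I would fix two admissible endpoint triples $(p_0,q_0,r_0)$ and $(p_1,q_1,r_1)$, each with $1<p_j\le 2\le q_j<\infty$ and $1/r_j=1/p_j-1/q_j$, together with $\theta\in(0,1)$, arranged so that $1/p=(1-\theta)/p_0+\theta/p_1$ and $1/q=(1-\theta)/q_0+\theta/q_1$; the relation $1/r=(1-\theta)/r_0+\theta/r_1=1/p-1/q$ is then automatic. Since $\G$ is tracial the real interpolation spaces are the noncommutative Lorentz spaces, $(L_{r_0},L_{r_1})_{\theta,\infty}=L_{r,\infty}$, $(L_{p_0},L_{p_1})_{\theta,p}=L_p$ and $(L_{q_0},L_{q_1})_{\theta,q}=L_q$. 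Feeding the two strong-type estimates of the previous paragraph into the bilinear real interpolation theorem, with second indices $u_0=\infty$ in the symbol slot, $u_1=p$ in the function slot and target index $u=q$, produces the desired bound, the admissibility condition $1/q\le 1/\infty+1/p$ holding precisely because $p\le q$. This gives $\|m_x:L_p\to L_q\|\precsim_{p,q}\|x\|_{L_{r,\infty}}$.

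The main obstacle I anticipate is this last step: verifying that the bilinear real interpolation theorem applies in the noncommutative $L_p$-setting with exactly these Lorentz indices, and ensuring that admissible endpoints exist. The endpoints live in the convex region $\{\,1/2\le 1/p_j<1,\ 0<1/q_j\le 1/2\,\}$, which contains the target point $(1/p,1/q)$; away from the corner $p=q=2$ one can always bracket the target by two distinct admissible points (fixing $q_0=q_1$ when $q=2$, or $p_0=p_1$ when $p=2$), while the degenerate case $p=q=2$, $r=\infty$ reduces to the trivial statement $\|m_x:L_2\to L_2\|\le\|x\|_\infty$ obtained directly from Plancherel. Checking the precise constants, and the density arguments needed to define $T$ on all of $L_{r_j}(\G)\times L_{p_j}(\widehat{\G})$, is routine once these interpolation identities are in place.
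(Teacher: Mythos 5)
Your strong-type step is fine: the chain $\|m_xa\|_q\le\|x\,\F(a)\|_{q'}\le\|x\|_r\|\F(a)\|_{p'}\le\|x\|_r\|a\|_p$ is exactly the complex-interpolation variant recorded in Remarks \ref{rem:Fourier-complex} and \ref{rem:Schur-complex} of the paper. The genuine gap is the upgrading step. The general bilinear real interpolation theorem of Lions--Peetre does \emph{not} hold under the admissibility condition you state. For an arbitrary bounded bilinear map $T\colon A_i\times B_i\to C_i$, $i=0,1$, the conclusion $T\colon(A_0,A_1)_{\theta,s}\times(B_0,B_1)_{\theta,t}\to(C_0,C_1)_{\theta,w}$ requires the Young-type condition $1/s+1/t\ge 1+1/w$ (in the $J$-method proof one decomposes $T(x,a)=\sum_{n,m}T(x_n,a_m)$ and the sum over the diagonals $n+m=k$ is a convolution of sequences, hence Young's inequality, not H\"older's), whereas you use $1/w\le 1/s+1/t$. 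With $s=\infty$ in the symbol slot and $t=p$ forced (the domain must be all of $L_p=L_{p,p}$), the correct condition reads $1/p\ge 1+1/w$, i.e.\ $p\le 1$, which fails at \emph{every} point of the range $1<p\le 2\le q<\infty$; equivalently, generic bilinear interpolation can never tolerate the weak space $L_{r,\infty}$ in one slot here, since that would force $r\le 1$. The condition you wrote down is the O'Neil/H\"older condition, which is a special feature of the particular bilinear map ``pointwise multiplication'', resting on the rearrangement submultiplicativity $\mu_{s+t}(xy)\le\mu_s(x)\mu_t(y)$ of Lemma \ref{lem:properties of Lorentz spaces}(1) --- a structural input that your abstract interpolation argument never supplies.

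The repair is to exploit precisely that structure, which is what the paper does: keep your factorization $m_x=\F^{-1}\circ M_x\circ\F$, but replace the bilinear interpolation by the Lorentz--H\"older inequality \eqref{ineq:Holder} in the middle, $\|xy\|_{q',q}\precsim\|x\|_{r,\infty}\|y\|_{p',q}$, and upgrade \emph{both} Hausdorff--Young applications from complex to real interpolation of the (linear!) maps $\F$ and $\F^{-1}$ between the $(1,\infty)$ and $(2,2)$ endpoints, yielding the Lorentz-improved inequalities \eqref{ineq:stronger HY} and \eqref{ineq:stronger HY for inverse}; these are glued by the nesting \eqref{ineq:lorentz space}, $\|y\|_{p',q}\precsim_{p,q}\|y\|_{p',p}$, valid since $p\le q$. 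This is the paper's four-step proof verbatim, and your proposal derives none of the three Lorentz-space ingredients. (Your scheme could alternatively be salvaged in two stages --- apply the correct bilinear theorem with indices $(s,t,w)=(\infty,1,\infty)$ to get restricted weak-type bounds $L_{r,\infty}\times L_{p_j,1}\to L_{q_j,\infty}$ at two bracketing points, then interpolate linearly with the symbol $x$ fixed --- but this is more involved and still rests on real interpolation identities such as \eqref{eq:real interpolation}, not on the one-shot bilinear theorem you invoke.) Also note that interpolating one variable at a time does not work directly: for fixed $a\in L_p$ the endpoint operator norms involve $\|a\|_{p_0}$ and $\|a\|_{p_1}$, which are generically infinite.
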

	See Sections \ref{sec:preliminaries} and \ref{sec:fourier_transform} for the corresponding definitions. We will not deduce Theorem \ref{thm:main reulst Lp-Lq multiplier} from Paley-type inequalities, but we may still extend Theorem \ref{thm:Hormander's Paley} to locally compact quantum groups with a slightly simpler proof. This is our second result.
	
	\begin{thm}\label{thm:main result-Paley type}
		Let $1<p\leq2$. Let $\G=(\M,\Delta,\varphi,\psi)$ be a locally compact quantum group with its dual $\widehat{\G}=(\widehat{\M},\widehat{\Delta},\widehat{\varphi},\widehat{\psi})$. Suppose that $\varphi$ and $\widehat{\varphi}$ are both tracial. Then we have
		\begin{equation*}
		\| a\F(x)\|_{L_p(\widehat{\G},\widehat{\varphi})}
		\precsim\| a\|_{L_{s,\infty}(\widehat{\G},\widehat{\varphi})}\| x\|_{L_p(\G,\varphi)},
		\end{equation*}
		for all $a\in L_{s,\infty}(\widehat{\G},\widehat{\varphi})$ and $x\in L_p(\G,\varphi)$, where $1/s=2/p-1$.
	\end{thm}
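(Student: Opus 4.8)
The plan is to bypass H\"ormander's dyadic and distribution-function arguments entirely and deduce the inequality from a \emph{refined} Hausdorff--Young inequality combined with H\"older's inequality in noncommutative Lorentz spaces. The starting observation is that the exponents match perfectly: writing $p'$ for the conjugate exponent of $p$, one has $1/p'=1-1/p$, so that $1/s+1/p'=(2/p-1)+(1-1/p)=1/p$ and $1/\infty+1/p=1/p$. Hence, if one knew that $\F$ maps $L_p(\G,\varphi)$ into the Lorentz space $L_{p',p}(\widehat\G,\widehat\varphi)$, then the generalized noncommutative H\"older inequality for Lorentz spaces would give, for all $a\in L_{s,\infty}(\widehat\G,\widehat\varphi)$ and $x\in L_p(\G,\varphi)$,
\[
\|a\F(x)\|_{L_p(\widehat\G,\widehat\varphi)}=\|a\F(x)\|_{L_{p,p}(\widehat\G,\widehat\varphi)}\precsim \|a\|_{L_{s,\infty}(\widehat\G,\widehat\varphi)}\,\|\F(x)\|_{L_{p',p}(\widehat\G,\widehat\varphi)},
\]
which is exactly the desired estimate once the last factor is controlled by $\|x\|_{L_p(\G,\varphi)}$.

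So the heart of the matter is the refined Hausdorff--Young bound $\|\F(x)\|_{L_{p',p}(\widehat\G,\widehat\varphi)}\precsim \|x\|_{L_p(\G,\varphi)}$ for $1<p\le 2$. I would obtain this by \emph{real} interpolation between the two endpoint estimates already available in this setting: the trivial bound $\|\F(x)\|_{L_\infty(\widehat\G)}\le \|x\|_{L_1(\G)}$ coming from the very definition of the Fourier transform on $L_1$, and the Plancherel identity $\|\F(x)\|_{L_2(\widehat\G)}=\|x\|_{L_2(\G)}$. Choosing $\theta=2/p'=2-2/p\in(0,1)$ (the value $\theta=1$, i.e.\ $p=2$, being the trivial Plancherel case), one computes $(1-\theta)+\theta/2=1/p$ and $\theta/2=1/p'$, so the functor $(\cdot,\cdot)_{\theta,p}$ sends the couple $(L_1,L_2)$ on $\G$ to $L_p(\G,\varphi)=L_{p,p}(\G,\varphi)$ and the couple $(L_\infty,L_2)$ on $\widehat\G$ to $L_{p',p}(\widehat\G,\widehat\varphi)$. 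The point worth stressing is that it is genuinely the real method, with second index equal to $p$, that produces the sharper target $L_{p',p}$; complex interpolation would only yield the classical Hausdorff--Young target $L_{p'}=L_{p',p'}$, whose larger second index is too weak to feed into the H\"older step above.

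Assembling the two displays gives the theorem, and the only routine verifications are the arithmetic of the exponents carried out above. The main technical obstacle, and the place where the traciality of $\varphi$ and $\widehat\varphi$ is used, is to have at one's disposal (i) the identification of the real interpolation spaces of the noncommutative $L_p$-scale with noncommutative Lorentz spaces, \emph{including} the endpoint $L_\infty=\M$, and (ii) the generalized H\"older inequality $\|yz\|_{L_{u,w}}\precsim \|y\|_{L_{u_1,w_1}}\|z\|_{L_{u_2,w_2}}$ whenever $1/u=1/u_1+1/u_2$ and $1/w=1/w_1+1/w_2$. Both of these structural facts rest on the underlying weight being a semifinite trace, which is precisely what the hypothesis on $\varphi$ and $\widehat\varphi$ guarantees; once they are in place, no further estimate is required.
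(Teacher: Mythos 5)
Your proposal is correct and is essentially the paper's own proof: the paper likewise combines the Lorentz-space H\"older inequality \eqref{ineq:Holder} (with exponents $1/p=1/s+1/p'$) with the real-interpolation Hausdorff--Young bound $\|\F(x)\|_{L_{p',p}(\widehat{\G},\widehat{\varphi})}\precsim_p\|x\|_{L_p(\G,\varphi)}$. In fact you spell out the interpolation step more carefully than the paper does, since the paper cites \eqref{ineq:stronger HY} (the variant with second index $p'$) while the step actually used is your version obtained from the functor $(\cdot,\cdot)_{\theta,p}$, and your observation that the complex method would only control $\|a\F(x)\|_{p,p'}$ matches the paper's Remark \ref{rem:Fourier-complex}.
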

	
	Here $\F$ denotes the Fourier transform; see Section \ref{sec:fourier_transform} for the definition. If furthermore, the dual quantum group $\widehat{\G}$ is compact, then Theorem \ref{thm:main reulst Lp-Lq multiplier} gives a sufficient conditions for $L_p$-Fourier multipliers on $\G$.  This is our third result.
	
	\begin{thm}\label{thm:main result-Lp multiplier}
		Fix $1<p<\infty$ and $1/p^*=|1/2-1/p|$. Let $\G$ be a compact quantum group of Kac type with Haar state $h$. Let $\widehat{\G}$ be its dual with dual Haar weight $\widehat{h}$. Let $\widehat{\F}$ be the Fourier transform over $\widehat{\G}$. Then for any $a=(a_{\pi})_{\pi\in\Irr(\G)}\in\widehat{\G}$, the Fourier multiplier $m_a:\widehat{\F}(b)\mapsto \widehat{\F}(ab)$ satisfies
		\begin{equation*}
		\|m_a:L_p(\G,h)\to L_p(\G,h)\|\precsim_{p}\|a\|_{\ell_{p^\ast,\infty}(\widehat{\G},\widehat{h})}.
		\end{equation*}
	\end{thm}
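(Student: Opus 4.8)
The plan is to deduce this $L_p$-boundedness directly from the $L_p$-$L_q$ estimate of Theorem \ref{thm:main reulst Lp-Lq multiplier}, applied with the roles of the quantum group and its dual interchanged, combined with the contractive embedding $L_2 \hookrightarrow L_p$ available for a state and a duality argument. Since $\G$ is compact of Kac type, its Haar state $h$ is tracial, and its dual $\widehat{\G}$ is discrete with tracial Haar weight $\widehat{h}$. Hence, taking $\widehat{\G}$ as the base quantum group, both it and its bidual $\doublehat{\G}=\G$ satisfy the traciality hypotheses of Theorem \ref{thm:main reulst Lp-Lq multiplier}. That theorem then yields, for $1<p\le 2\le q<\infty$ and $1/r=1/p-1/q$,
\[
\|m_a : L_p(\G,h)\to L_q(\G,h)\|\precsim_{p,q}\|a\|_{\ell_{r,\infty}(\widehat{\G},\widehat{h})},
\]
where the symbol $a\in\ell_{r,\infty}(\widehat{\G},\widehat{h})=L_{r,\infty}(\widehat{\G},\widehat{h})$ now lives on the discrete side, and $m_a:\widehat{\F}(b)\mapsto\widehat{\F}(ab)$ is exactly the multiplier in the statement.

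For $1<p\le 2$ I would specialize this to $q=2$. Then $1/r=1/p-1/2=1/p^\ast$, so $r=p^\ast$ and the estimate reads $\|m_a : L_p(\G,h)\to L_2(\G,h)\|\precsim_{p}\|a\|_{\ell_{p^\ast,\infty}(\widehat{\G},\widehat{h})}$. Because $h$ is a state, $(\M,h)$ behaves like a probability space and $\|\,\cdot\,\|_{L_p(\G,h)}\le\|\,\cdot\,\|_{L_2(\G,h)}$ for $p\le 2$; that is, $L_2(\G,h)\hookrightarrow L_p(\G,h)$ contractively. Composing the $L_p\to L_2$ bound with this inclusion gives at once $\|m_a : L_p(\G,h)\to L_p(\G,h)\|\precsim_{p}\|a\|_{\ell_{p^\ast,\infty}(\widehat{\G},\widehat{h})}$, which is the assertion for $1<p\le 2$ (the endpoint $p=2$ reducing to the trivial Plancherel bound $\|m_a\|_{L_2\to L_2}=\|a\|_\infty$ with $2^\ast=\infty$).

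For $2\le p<\infty$ I would argue by duality. Let $p'$ be the conjugate exponent, so $1<p'\le 2$, and observe that $1/(p')^\ast=1/p'-1/2=1/2-1/p=1/p^\ast$, whence $(p')^\ast=p^\ast$. Under the standard duality $L_p(\G,h)^\ast=L_{p'}(\G,h)$ given by $\langle f,g\rangle=h(f^\ast g)$, the adjoint of the Fourier multiplier $m_a$ is again a Fourier multiplier, namely $m_{a^\ast}$ (left multiplication by the adjoint $a^\ast$ in the von Neumann algebra $\widehat{\G}$), acting on $L_{p'}(\G,h)$. Since $\|a^\ast\|_{\ell_{p^\ast,\infty}(\widehat{\G})}=\|a\|_{\ell_{p^\ast,\infty}(\widehat{\G})}$ and $1<p'\le 2$, the case already treated applies to $m_{a^\ast}$ and gives $\|m_a:L_p\to L_p\|=\|m_{a^\ast}:L_{p'}\to L_{p'}\|\precsim_{p}\|a^\ast\|_{\ell_{(p')^\ast,\infty}}=\|a\|_{\ell_{p^\ast,\infty}}$, completing the proof.

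The step I expect to require the most care is the identification of the Banach-space adjoint of $m_a$ with the Fourier multiplier $m_{a^\ast}$ of the same weak-$\ell_{p^\ast}$ symbol norm: one must track how left multiplication by $a$ on the dual side behaves under the trace duality and the Plancherel identification, verify that the adjoint is still a left-multiplication Fourier multiplier with symbol $a^\ast$, and check that $\|\,\cdot\,\|_{\ell_{p^\ast,\infty}(\widehat{\G})}$ is invariant under $a\mapsto a^\ast$, which is where traciality of $\widehat{h}$ enters. The remaining ingredients---the specialization $q=2$ and the contractive embedding $L_2\hookrightarrow L_p$ for the state $h$---are routine once the correct dictionary between $\G$ and $\widehat{\G}$ in Theorem \ref{thm:main reulst Lp-Lq multiplier} has been fixed.
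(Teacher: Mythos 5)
Your proposal is correct, and its first half is exactly the paper's argument: the paper also applies Theorem \ref{thm:main reulst Lp-Lq multiplier} with $\widehat{\G}$ as the base quantum group (both $\widehat{h}$ and $h$ tracial since $\G$ is Kac), obtaining $\|\widehat{\F}(ab)\|_q\precsim_{p,q}\|a\|_{r,\infty}\|\widehat{\F}(b)\|_p$, and for $1<p\le 2$ it sets $q=2$ and uses $\|\cdot\|_p\le\|\cdot\|_q$ (valid because $h$ is a state) at the output, just as you do. Where you diverge is the range $2\le p<\infty$: you pass to the adjoint and invoke the case already proved for $p'$, whereas the paper never takes adjoints --- it simply runs the same monotonicity trick on the \emph{input} side, setting $p=2$ in the $L_p$-$L_q$ bound and using $\|\widehat{\F}(b)\|_2\le\|\widehat{\F}(b)\|_q$ to get $\|m_a:L_q\to L_q\|\precsim_q\|a\|_{q^*,\infty}$ directly. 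The paper's route is the more economical one: it needs nothing beyond H\"older for the state $h$, while yours requires identifying the Banach-space adjoint of $m_a$ with $m_{a^*}$ and checking $\|a^*\|_{\ell_{p^*,\infty}}=\|a\|_{\ell_{p^*,\infty}}$. Both verifications do go through --- the Plancherel identity $h(\widehat{\F}(b)^*\widehat{\F}(c))=\widehat{h}(b^*c)$ gives $\langle m_a\widehat{\F}(b),\widehat{\F}(c)\rangle=\widehat{h}(b^*a^*c)=\langle\widehat{\F}(b),m_{a^*}\widehat{\F}(c)\rangle$ on the dense subspace $\Pol(\G)=\widehat{\F}(\text{finitely supported }b)$, and $\mu_t(a^*)=\mu_t(a)$ holds for generalized singular numbers in any semifinite von Neumann algebra (Fack--Kosaki), so the symbol-norm invariance is automatic rather than a consequence of traciality of $\widehat{h}$ as you suggest --- but they are extra machinery the paper's symmetric use of the embedding avoids. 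One small correction: duality buys you nothing in generality here either, since the paper's input-side argument covers $p\ge 2$ with the same constant and no density bookkeeping.
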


An interesting family of such examples is obtained by choosing $\G=\widehat{G}$ as the group von Neumann algebra of a discrete group $G$. 

\begin{cor}\label{cor:group vna}
	 For any discrete group $G$ let $\widehat{G}$ be the group von Neumann algebra equipped with the canonical tracial state $\tau$. Then for any $\phi\in\ell_{p^*,\infty}(G)$ with $1/p^*=|1/2-1/p|$, the Fourier multiplier $m_{\phi}:L_p(\widehat{G},\tau)\to L_{p}(\widehat{G},\tau),\lambda_g\mapsto \phi(g)\lambda_g$ extends to a bounded map such that 
	\begin{equation*}
	\|m_{\phi}:L_p(\widehat{G},\tau)\to L_{p}(\widehat{G},\tau)\|\precsim_{p} \|\phi\|_{\ell_{p^*,\infty}(G)},
	\end{equation*}
	where $\lambda$ is the left regular representation of $G$.
\end{cor}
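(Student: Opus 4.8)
The plan is to obtain Corollary \ref{cor:group vna} directly from Theorem \ref{thm:main result-Lp multiplier} by taking the compact quantum group $\G$ there to be the group von Neumann algebra $\widehat{G}$ of the discrete group $G$. Apart from this specialization no new analytic ingredient is required, so the work reduces to translating the quantum-group formalism into concrete group language and checking that every object in Theorem \ref{thm:main result-Lp multiplier} becomes its classical counterpart.

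First I would record that for a discrete group $G$ the algebra $\widehat{G} = VN(G)$, with coproduct determined by $\lambda_g \mapsto \lambda_g \otimes \lambda_g$ and the canonical trace $\tau$ (normalized by $\tau(\lambda_g) = \delta_{g,e}$), is a compact quantum group of Kac type whose Haar state is exactly $\tau$: indeed $\tau(1) = 1$ so $\widehat{G}$ is compact, the antipode $\lambda_g \mapsto \lambda_{g^{-1}}$ is bounded, and $\tau$ is tracial, which is what the Kac condition demands. Thus $\G = \widehat{G}$ satisfies the hypotheses of Theorem \ref{thm:main result-Lp multiplier} and $L_p(\G,h) = L_p(\widehat{G},\tau)$. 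Next I would identify the dual side: every irreducible corepresentation of $VN(G)$ is one-dimensional and labelled by a group element, so $\Irr(\G)$ is canonically $G$, and the dual $\widehat{\G}$ is the discrete quantum group with $L^\infty(\widehat{\G}) = \ell^\infty(G)$ carrying the counting measure as dual Haar weight $\widehat{h}$. Under this identification an element $a = (a_\pi)_{\pi\in\Irr(\G)} \in \widehat{\G}$ is nothing but a function $\phi \in \ell^\infty(G)$, and since $\ell^\infty(G)$ is abelian the noncommutative Lorentz norm $\ell_{p^*,\infty}(\widehat{\G},\widehat{h})$ collapses to the classical weak norm, giving $\|a\|_{\ell_{p^*,\infty}(\widehat{\G},\widehat{h})} = \|\phi\|_{\ell_{p^*,\infty}(G)}$.

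It then remains to check that the abstract multiplier of Theorem \ref{thm:main result-Lp multiplier} is the stated one. Under the above identifications the Fourier transform $\widehat{\F}$ sends $b \in \ell^\infty(G)$ to $\sum_g b(g)\lambda_g \in VN(G)$, while the product $ab$ in $\widehat{\G} = \ell^\infty(G)$ is pointwise, so that $\widehat{\F}(ab) = \sum_g \phi(g)b(g)\lambda_g$. Hence $m_a : \widehat{\F}(b) \mapsto \widehat{\F}(ab)$ acts by $\lambda_g \mapsto \phi(g)\lambda_g$, which is precisely $m_\phi$. Feeding this into the estimate of Theorem \ref{thm:main result-Lp multiplier} yields $\|m_\phi : L_p(\widehat{G},\tau) \to L_p(\widehat{G},\tau)\| \precsim_p \|\phi\|_{\ell_{p^*,\infty}(G)}$, which is the claim.

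I expect the only genuine obstacle to be bookkeeping: one must keep the iterated duals and the Fourier-transform conventions consistent so that the formal object $\widehat{\F}(b) \mapsto \widehat{\F}(ab)$ really equals $\lambda_g \mapsto \phi(g)\lambda_g$, and one must invoke the Kac property to be certain the Haar state coincides with the trace $\tau$ rather than a KMS-twisted state. Once these identifications are in place the corollary is immediate.
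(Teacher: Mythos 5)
Your proposal is correct and coincides with the paper's own route: the paper offers no separate argument for Corollary \ref{cor:group vna}, treating it exactly as you do, namely as the specialization of Theorem \ref{thm:main result-Lp multiplier} to the cocommutative compact quantum group $\G=\widehat{G}=VN(G)$ of Kac type, where $\Irr(\G)\cong G$ with all $n_\pi=1$, the dual weight $\widehat{h}$ is the counting measure on $\ell_\infty(G)$, and $m_a$ becomes $\lambda_g\mapsto\phi(g)\lambda_g$. Your closing caveats (Haar state $=\tau$ via the Kac property, Fourier-transform conventions possibly introducing a harmless $g\mapsto g^{-1}$ relabelling that leaves the weak-$\ell_{p^*}$ norm unchanged) are exactly the bookkeeping the paper leaves implicit.
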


An analogue of $L_p$-$L_q$ Fourier multipliers theorem is also valid for Schur multipliers. We use $\S_p(H)$ to denote the Schatten $p$-classes $L_p(B(H))$.

\begin{thm}\label{thm:schur}
	Let $1< p\le 2\le q<\infty$ and $1/r=1/p-1/q$. Let $X$ be a set. The Schur multiplier $A:(x_{ij})_{i,j\in X}\mapsto (a_{ij}x_{ij})_{i,j\in X}$ satisfies
	\begin{equation*}
	\|A:\S_p(\ell_{2}(X))\to \S_q(\ell_{2}(X))\|\precsim_{p,q} \|a\|_{\ell_{r,\infty}(X\times X)},
	\end{equation*}
	where on the right hand side $a=(a_{ij})_{i,j\in X}$ is identified as an element in $\mathbb{C}^{X\times X}$.
\end{thm}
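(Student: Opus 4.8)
The plan is to exploit that the index set $X\times X$ is atomic, so that the weak space $\ell_{r,\infty}$ already dominates the sup-norm; this makes the Schur case strictly easier than its quantum-group counterpart. First I would record that a Schur multiplier factors through the Hilbert--Schmidt class: since $q\ge2\ge p$ we have the contractive inclusions $\mathcal{S}_p\hookrightarrow\mathcal{S}_2\hookrightarrow\mathcal{S}_q$, and on $\mathcal{S}_2$ the multiplier $A$ is diagonal in the orthonormal matrix-unit basis with eigenvalues $(a_{ij})$. Hence for $y\in\mathcal{S}_p$,
\begin{equation*}
\|Ay\|_{\mathcal{S}_q}\le\|Ay\|_{\mathcal{S}_2}=\Big(\sum_{i,j\in X}|a_{ij}|^2|y_{ij}|^2\Big)^{1/2}\le\|a\|_{\ell_\infty(X\times X)}\|y\|_{\mathcal{S}_2}\le\|a\|_{\ell_\infty(X\times X)}\|y\|_{\mathcal{S}_p}.
\end{equation*}
Since $X\times X$ carries the counting measure, $\|a\|_{\ell_\infty(X\times X)}$ equals the first value of the decreasing rearrangement of $a$ and is therefore bounded by $\|a\|_{\ell_{r,\infty}(X\times X)}$, which already gives the theorem with constant $1$.

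Although this settles the statement, I would also present the argument that runs parallel to Theorem \ref{thm:main reulst Lp-Lq multiplier}, since it is that route which survives when the underlying trace is non-atomic. Writing $Ay=a\circ y$ for the entrywise product $(a_{ij}y_{ij})_{i,j}$, the Schur analogue of the Hausdorff--Young inequality is the pair of contractions
\begin{equation*}
\|y\|_{\mathcal{S}_q}\le\|y\|_{\ell_{q'}(X\times X)}\quad(q\ge2),\qquad\|y\|_{\ell_{p'}(X\times X)}\le\|y\|_{\mathcal{S}_p}\quad(p\le2),
\end{equation*}
where $\ell_s(X\times X)$ is the entrywise $\ell_s$-norm and $s'$ the conjugate exponent. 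Combined with the entrywise Hölder inequality $\|a\circ y\|_{\ell_{q'}}\le\|a\|_{\ell_r}\|y\|_{\ell_{p'}}$, whose balance condition $1/q'=1/r+1/p'$ is precisely $1/r=1/p-1/q$, this chain yields the strong-type estimate
\begin{equation*}
\|A:\mathcal{S}_p\to\mathcal{S}_q\|\le\|a\|_{\ell_r(X\times X)}.
\end{equation*}
This reproduces, term by term, the Hausdorff--Young/Hölder computation behind Theorem \ref{thm:main reulst Lp-Lq multiplier}, and in particular uses no input of Paley type (cf.\ Theorem \ref{thm:Hormander's Paley}).

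To pass from this strong-type estimate to the stated weak-type bound I would interpolate. Keeping $(p,q)$ fixed and choosing auxiliary pairs $(p_0,q_0)$, $(p_1,q_1)$ with $1/r_i=1/p_i-1/q_i$ whose reciprocals $(1/p_i,1/q_i)$ straddle $(1/p,1/q)$, the strong-type estimate holds at both endpoints, and the relation $1/r=1/p-1/q$ is preserved under linear interpolation of the reciprocal exponents. Viewing $(a,y)\mapsto a\circ y$ as a bilinear map bounded $\ell_{r_i}\times\mathcal{S}_{p_i}\to\mathcal{S}_{q_i}$, I would invoke bilinear real interpolation with the $(\theta,\infty)$-method on the symbol variable to obtain boundedness $\ell_{r,\infty}\times\mathcal{S}_p\to\mathcal{S}_q$, which is the claim. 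The one delicate point I anticipate is exactly this last step: arranging the Lorentz second-index bookkeeping so that the symbol sits in the \emph{weak} space $\ell_{r,\infty}$ while the input and output retain their full $\mathcal{S}_p$- and $\mathcal{S}_q$-norms rather than Schatten--Lorentz refinements. In the present atomic setting this difficulty is vacuous, as the first paragraph already yields the conclusion; but it is the genuine heart of the matter in the non-atomic analogue of Theorem \ref{thm:main reulst Lp-Lq multiplier}.
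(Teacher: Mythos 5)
Your first paragraph is correct and by itself settles the theorem, by a route genuinely different from the paper's: the contractive factorization $\mathcal{S}_p\hookrightarrow\mathcal{S}_2\hookrightarrow\mathcal{S}_q$ (valid since $p\le 2\le q$ and singular value sequences lie in nested $\ell_s$-spaces), the diagonal action of $A$ on Hilbert--Schmidt operators, and the contractive embedding $\ell_{r,\infty}(X\times X)\hookrightarrow\ell_\infty(X\times X)$ (valid because $\mu_t(a)=\|a\|_{\ell_\infty}$ for $0<t<1$ under counting measure) give $\|A:\mathcal{S}_p\to\mathcal{S}_q\|\le\|a\|_{\ell_\infty(X\times X)}\le\|a\|_{\ell_{r,\infty}(X\times X)}$, i.e.\ the stated bound with constant $1$ and under the strictly weaker hypothesis $a\in\ell_\infty$. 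The paper instead runs the same scheme as for Theorem \ref{thm:main reulst Lp-Lq multiplier}: real interpolation upgrades the Schur Hausdorff--Young contractions to the Lorentz refinements \eqref{ineq:stronger HY_Schur} and \eqref{ineq:stronger HY_Schur_dual}, which are chained through the Lorentz H\"older inequality \eqref{ineq:Holder} and the nesting \eqref{ineq:lorentz space}. What your argument buys is brevity and constant $1$; what the paper's buys is a proof using none of the atomic structure (nested Schatten classes, $\ell_{r,\infty}\subset\ell_\infty$) and which therefore runs in parallel with the quantum group case, exactly as you observe.

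Concerning your second, interpolation-based sketch: the strong-type bound $\|A:\mathcal{S}_p\to\mathcal{S}_q\|\le\|a\|_{\ell_r(X\times X)}$ is correct and is precisely the paper's Remark \ref{rem:Schur-complex}, but the final bilinear real interpolation step has the gap you yourself flag, and it is not mere bookkeeping: in the Lions--Peetre bilinear theorem the second Lorentz indices must satisfy $1/s+1/t=1+1/u$, so putting the $(\theta,\infty)$-method on the symbol variable ($s=\infty$) forces $t\le 1$ in the other variable, i.e.\ an input space of the form $\mathcal{S}_{p,1}\subsetneq\mathcal{S}_p$, or else a degraded output. The paper circumvents exactly this obstruction by \emph{not} interpolating the bilinear map: it interpolates the two Hausdorff--Young inequalities separately, which places the harmless second indices $q$ and $p$ on the output and input sides via \eqref{ineq:stronger HY_Schur} and \eqref{ineq:stronger HY_Schur_dual}, keeps the genuine weak norm $\|a\|_{\ell_{r,\infty}}$ through the H\"older inequality \eqref{ineq:Holder}, and absorbs the resulting mismatch between $\ell_{p',q}$ and $\ell_{p',p}$ with \eqref{ineq:lorentz space}, using $p\le q$. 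That is the resolution of the ``delicate point'' you anticipated, and it is what makes the argument work in the non-atomic setting of Theorem \ref{thm:main reulst Lp-Lq multiplier} as well.
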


The paper is organized as follows. In Section \ref{sec:preliminaries} we recall basic knowledge of locally compact quantum groups and noncommutative ($L_p$- and) Lorentz spaces. Section \ref{sec:fourier_transform} presents the Fourier transforms on locally compact quantum groups and (complex, real) Hausdorff--Young inequalities. In Section \ref{sec:proof}, we prove the main results and give some examples.
	
	\section{Preliminaries}\label{sec:preliminaries}
	In this section we collect some necessary preliminaries of locally compact quantum groups, noncommutative $L_p$-spaces and noncommutative Lorentz spaces.
	
	\subsection{Noncommutative $L_p$-spaces and Lorentz spaces associated with a semifinite von Neumann algebra}
	We concentrate ourselves on noncommutative $L_p$-spaces associated with semifinite von Neumann algebras, which were first laid out in the early 50's by Segal \cite{Segal1953abstractintergration} and Dixmier\cite{Dixmier1953}. The noncommutative Lorentz spaces will be treated at the same time. We refer to \cite{PisierXu2003LP} for more discussions.
	
	Let $\M$ be a semifinite von Neumann algebra equipped with a normal semifinite faithful ($n.s.f.$) trace $\tau$. Denote by $\M^+$ the positive cone of $\M$. Let $\mathcal{S}^+$ denote the set of all $x\in\M^+$ such that $\tau(\text{supp}(x))<\infty$, where $\text{supp}(x)$ denotes the support of $x$. Let $\mathcal{S}$ be the linear span of $\mathcal{S}^+$. Then $\mathcal{S}$ is a weak*-dense *-subalgebra of $\M$. Given $0<p<\infty$, we define
	\begin{equation*}
	\Vert x\Vert_p:=[\tau(|x|^p)]^{\frac{1}{p}},~~x\in\mathcal{S},
	\end{equation*}
	where $|x|=(x^*x)^{\frac{1}{2}}$ is the modulus of $x$. Then $(\mathcal{S},\Vert\cdot\Vert_p)$ is a normed (or quasi-normed for $p<1$) space. Its completion is called \emph{noncommutative $L_p$-space associated with $(\M,\tau)$}, denoted by $L_p(\M,\tau)$ or simply by $L_p(\M)$. As usual, we set $L_\infty(\M,\tau)=\M$ equipped with the operator norm.
	
	For $1\le p<\infty$, the dual space of $L_p(\M)$ is $L_{p'}(\M)$ with respect to the duality
	$$\langle x,y\rangle :=\tau(xy),~~,x\in L_p(\M),y\in L_{p'}(\M).$$
	In particular, $L_1(\M)$ is identified with $\M_*$ via the map $j(x):=\tau(x\cdot),x\in L_1(\M)$. 
	
	The elements in $L_p(\M)$ can be viewed as closed densely defined operators on $H$ ($H$ being the Hilbert space on which $\M$ acts). A linear closed operator $x$ is said to be \textit{affiliated with} $\M$ if it commutes with all unitary elements in $\M'$, i.e. $xu=ux$ for any unitary $u\in\M'$. Note that $x$ can be unbounded on $H$. An operator $x$ affiliated with $\M$ is said to be \textit{measurable with respect to $(\M,\tau)$}, or simply \textit{measurable} if for any $\varepsilon>0$, there exists a projection $e\in\M$ such that
	$$e(H)\subset\mathcal{D}(x) \text{ and }  \tau(e^\perp)\leq\varepsilon,$$
	where $e^\perp=1-e$. We denote by $L_0(\M,\tau)$, or simply $L_0(\M)$ the family of measurable operators. For $x\in L_0(\M,\tau)$, we define the \textit{distribution function} of $x$
	\begin{equation*}
	\lambda_s(x):=\tau(\chi_{(s,\infty)}(|x|)),~~s\geq0,
	\end{equation*}
	where $\chi_{(s,\infty)}(|x|)$ is the spectral projection of $|x|$ corresponding to the interval $(s,\infty)$, and define the \textit{generalized singular numbers} of $x$
	\begin{equation*}
	\mu_t(x):=\inf\{s>0:\lambda_s(x)< t\},~~t\geq 0.
	\end{equation*}
	Similar to the classical case, for $0<p<\infty, 0<q\leq\infty$, the \emph{noncommutative Lorentz space} $L_{p,q}(\M)$ is defined as the collection of all measurable operators $x$ such that
	\begin{equation*}
	\Vert x\Vert_{p,q}:=\left(\int_{0}^{\infty}(t^{\frac{1}{p}}\mu_t(x))^q\frac{dt}{t}\right)^{\frac{1}{q}}<\infty.
	\end{equation*}
	Clearly, $L_{p,p}(\M)=L_p(\M)$ with $\|\cdot\|_{p,p}=\|\cdot\|_{p}$. The space $L_{p,\infty}(\M)$ is usually called the \emph{weak $L_p$-space}, $0<p<\infty$, and one defines
	\begin{equation*}
	\Vert x\Vert_{p,\infty}:=\sup_{t>0}t^{\frac{1}{p}}\mu_t(x).
	\end{equation*}
	Like the classical $L_p$-spaces, noncommutative $L_p$-spaces behave well with respect to interpolation. Our reference for interpolation theory is \cite{BerghLofstrom1976interpolationspaces}. Let $1\leq p_0\leq p_1\leq\infty$, $1\leq q\leq\infty$ and $0<\theta<1$. Suppose 
	$$\frac{1}{p}=\frac{1-\theta}{p_0}+\frac{\theta}{p_1}.$$
	Then it is well-known that \cite[Section 2]{PisierXu2003LP}
	\begin{equation*}
	(L_{p_0}(\M),L_{p_1}(\M))_\theta=L_p(\M) \text{  (with equal norms)  }
	\end{equation*}
	and 
	\begin{equation}\label{eq:real interpolation}
	(L_{p_0}(\M),L_{p_1}(\M))_{\theta,q}=L_{p,q}(\M)\text{  (with equivalent quasi-norms)},
	\end{equation}
	 where $(\cdot,\cdot)_\theta$ and $(\cdot,\cdot)_{\theta,q}$ denote respectively the complex and real interpolation methods.
	
	We formulate here some properties that we will use in this paper. For the proofs we refer to \cite{FackKosaki1986sigularnumbers} and \cite{Grafakos2014classical}.
		
	\begin{lem}\label{lem:properties of Lorentz spaces}
		Let $\M$ be a von Neumann algebra equipped with a $n.s.f.$ trace $\tau$. We have
		\begin{enumerate}
			\item $\mu_{s+t}(xy)\leq\mu_s(x)\mu_t(y)$ for all $s,t\geq 0$ and $x,y\in L_0(\M)$;
			\item for any $1< p,q<\infty$ and $q<r\leq\infty$, 
			\begin{equation}\label{ineq:lorentz space}
			\Vert x\Vert_{p,r}\precsim_{p,q,r}\Vert x\Vert_{p,q},~~x\in L_{p,q}(\M),
			\end{equation}
			where the constant is $c_{p,q,r}=(q/p)^{\frac{1}{q}-\frac{1}{r}}$.
		\end{enumerate}	
	\end{lem}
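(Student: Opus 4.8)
The plan is to treat the two parts separately, each reducing to standard facts about the generalized singular numbers $\mu_t$ from Fack--Kosaki; the only real input is the approximation-number characterization of $\mu_t$, with the rest being either bookkeeping or a one-line estimate.

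For (1) I would first recall the characterization
\[
\mu_t(x)=\inf\{\|x-z\|_\infty : z\in L_0(\M),\ \tau(l(z))\le t\},
\]
where $l(z)$ is the left support projection of $z$; the inequality ``$\le$'' comes from taking $z=x\,\chi_{(\mu_t(x)+\varepsilon,\infty)}(|x|)$, and ``$\ge$'' from the definition of $\mu_t$. Granting this, I would run the continuous analogue of the classical singular-value argument. Given $\varepsilon>0$, choose $z_1,z_2$ with $\tau(l(z_1))\le s$, $\tau(l(z_2))\le t$, and $\|x-z_1\|\le\mu_s(x)+\varepsilon$, $\|y-z_2\|\le\mu_t(y)+\varepsilon$, and set $w:=z_1y+(x-z_1)z_2$. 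Expanding the product gives $xy-w=(x-z_1)(y-z_2)$, so $\|xy-w\|\le(\mu_s(x)+\varepsilon)(\mu_t(y)+\varepsilon)$. To bound the rank of $w$ I use $l(z_1y)\le l(z_1)$ on the first summand and, on the second, the trace identity $\tau(l(c))=\tau(r(c))$ together with $r((x-z_1)z_2)\le r(z_2)$; subadditivity of $\tau$ on a join of projections then yields $\tau(l(w))\le\tau(l(z_1))+\tau(r(z_2))\le s+t$. Feeding $w$ into the characterization gives $\mu_{s+t}(xy)\le(\mu_s(x)+\varepsilon)(\mu_t(y)+\varepsilon)$, and letting $\varepsilon\to0$ finishes (1).

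For (2) I would first settle the endpoint $r=\infty$. Since $t\mapsto\mu_t(x)$ is nonincreasing,
\[
\|x\|_{p,q}^q\ge\int_0^u\bigl(t^{1/p}\mu_t(x)\bigr)^q\,\frac{dt}{t}\ge\mu_u(x)^q\int_0^u t^{q/p}\,\frac{dt}{t}=\frac{p}{q}\,u^{q/p}\mu_u(x)^q,
\]
so $u^{1/p}\mu_u(x)\le(q/p)^{1/q}\|x\|_{p,q}$ for every $u$, i.e. $\|x\|_{p,\infty}\le(q/p)^{1/q}\|x\|_{p,q}$. For finite $q<r<\infty$ I split the integrand and extract the weak norm,
\[
\|x\|_{p,r}^r=\int_0^\infty\bigl(t^{1/p}\mu_t(x)\bigr)^{r-q}\bigl(t^{1/p}\mu_t(x)\bigr)^q\,\frac{dt}{t}\le\|x\|_{p,\infty}^{\,r-q}\,\|x\|_{p,q}^q,
\]
so $\|x\|_{p,r}\le\|x\|_{p,\infty}^{1-q/r}\|x\|_{p,q}^{q/r}$; substituting the endpoint bound produces exactly $(q/p)^{(1/q)(1-q/r)}=(q/p)^{1/q-1/r}$, as claimed.

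I expect the only delicate point to be the support bookkeeping in (1): keeping track that the two summands of $w$ are controlled on opposite sides and invoking $\tau(l(\cdot))=\tau(r(\cdot))$ (which holds because $l(c)$ and $r(c)$ are Murray--von Neumann equivalent via the polar decomposition). Part (2), by contrast, is a routine computation once the monotonicity of $\mu_t$ and the endpoint estimate are in place.
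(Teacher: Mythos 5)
Your proposal is correct, but there is no in-paper argument to compare it against: the paper states this lemma without proof and defers to \cite{FackKosaki1986sigularnumbers} for (1) and to \cite{Grafakos2014classical} for (2), and what you wrote is essentially the standard arguments from those references. For (1), Fack--Kosaki derive $\mu_{s+t}(xy)\le\mu_s(x)\mu_t(y)$ from their projection formula $\mu_t(x)=\inf\{\Vert xe\Vert : e\in\mathcal{M}\ \text{a projection},\ \tau(1-e)\le t\}$ (their Proposition 2.2), whereas you use the equivalent distance-to-$\tau$-finite-rank characterization together with the classical Fan splitting $xy-(z_1y+(x-z_1)z_2)=(x-z_1)(y-z_2)$; the two formulations convert into each other in one line (given a projection $e$ take $z=x(1-e)$; given $z$ take $e=1-r(z)$), and your support bookkeeping --- $l(z_1y)\le l(z_1)$, $r((x-z_1)z_2)\le r(z_2)$, $\tau(l(c))=\tau(r(c))$ via the polar decomposition, and the Kaplansky-type bound $\tau(e\vee f)\le\tau(e)+\tau(f)$ --- is sound. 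One point to adjust against this paper's text: the paper defines $\mu_t(x)=\inf\{s>0:\lambda_s(x)<t\}$ with a \emph{strict} inequality, and under that convention your distance characterization fails at jump points (for a projection $p$ with $\tau(p)=t$ the strict convention gives $\mu_t(p)=1$ while the distance formula gives $0$); your argument really uses the usual Fack--Kosaki convention $\mu_t(x)=\inf\{s\ge0:\lambda_s(x)\le t\}$, which agrees with the paper's for all but countably many $t$, so (1) survives by a one-line left-limit remark and (2) is untouched, since there only the almost-everywhere behaviour of $t\mapsto\mu_t(x)$ enters. Your part (2) is exactly the commutative computation of \cite[Proposition 1.4.10]{Grafakos2014classical} --- the endpoint $r=\infty$ by minorizing the integral over $(0,u)$ using monotonicity of $\mu_t$, then interpolating the integrand for $q<r<\infty$ --- and it transfers verbatim to the semifinite setting because the whole estimate lives at the level of the scalar decreasing function $t\mapsto\mu_t(x)$; you also recover the exact constant $(q/p)^{\frac{1}{q}-\frac{1}{r}}$ claimed in the lemma, and in fact your computation needs neither $p>1$ nor $q>1$.
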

	
	H\"older type inequalities hold on noncommutative Lorentz space. We only present here a special case that is enough for our use. We give a proof here for reader's convenience.
	
	\begin{lem}
		Let $0<p_0,p_1<\infty$, $0<q<\infty$, and $1/p=1/p_0+1/p_1$.
		Let $\M$ be a von Neumann algebra equipped with a $n.s.f.$ trace $\tau$. Then we have
		\begin{equation}\label{ineq:Holder}
		\Vert xy\Vert_{p,q}\precsim_{p}\Vert x\Vert_{p_0,\infty}\Vert y\Vert_{p_1,q},~~x\in L_{p_0,\infty}(\M),y\in L_{p_1,q}(\M),
		\end{equation}
		where the constant is $c_p=2^{\frac{1}{p}}$.
	\end{lem}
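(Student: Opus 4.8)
The plan is to reduce everything to the two ingredients already available: the submultiplicativity of generalized singular numbers from Lemma~\ref{lem:properties of Lorentz spaces}(1), and the elementary pointwise bound on $\mu_t$ coming from the weak-$L_{p_0}$ quasinorm. The whole argument is a single estimate on the defining integral of $\|\cdot\|_{p,q}$, and the factor $2^{1/p}$ will appear naturally from a change of variables.

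First I would specialize the inequality $\mu_{s+t}(xy)\le\mu_s(x)\mu_t(y)$ to the diagonal $s=t$, which gives $\mu_{2t}(xy)\le\mu_t(x)\mu_t(y)$ for all $t\ge 0$. Next, directly from the definition $\|x\|_{p_0,\infty}=\sup_{t>0}t^{1/p_0}\mu_t(x)$ one reads off the pointwise bound $\mu_t(x)\le t^{-1/p_0}\|x\|_{p_0,\infty}$. Combining these two facts yields
\begin{equation*}
\mu_{2t}(xy)\le t^{-1/p_0}\|x\|_{p_0,\infty}\,\mu_t(y),\qquad t>0.
\end{equation*}

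Then I would insert this into the integral defining $\|xy\|_{p,q}^q$ after the substitution $u=2t$, which leaves the scale-invariant measure $du/u=dt/t$ unchanged and pulls out a factor $(2t)^{1/p}$. Using the crucial arithmetic relation $\tfrac1p-\tfrac1{p_0}=\tfrac1{p_1}$, the exponent collapses so that $t^{1/p}\mu_{2t}(xy)\le 2^{-1/p}\,(2t)^{1/p}t^{-1/p_0}\|x\|_{p_0,\infty}\mu_t(y)$ reorganizes into $t^{1/p_1}\mu_t(y)$ times $\|x\|_{p_0,\infty}$ and the power of $2$. Concretely,
\begin{equation*}
\|xy\|_{p,q}^q=2^{q/p}\int_0^\infty\bigl(t^{1/p}\mu_{2t}(xy)\bigr)^q\frac{dt}{t}
\le 2^{q/p}\|x\|_{p_0,\infty}^q\int_0^\infty\bigl(t^{1/p_1}\mu_t(y)\bigr)^q\frac{dt}{t}
=2^{q/p}\|x\|_{p_0,\infty}^q\|y\|_{p_1,q}^q,
\end{equation*}
and taking $q$-th roots produces exactly the asserted constant $c_p=2^{1/p}$.

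In truth there is no serious obstacle here: once one thinks to apply submultiplicativity at equal indices $s=t$, the estimate is forced. The only point requiring a little care is bookkeeping the change of variables so that the $2^{1/p}$ is tracked sharply rather than absorbed into an unspecified constant, and checking that the index identity $\tfrac1p-\tfrac1{p_0}=\tfrac1{p_1}$ is what makes the residual integral literally equal to $\|y\|_{p_1,q}^q$; both are routine. The argument never uses $q<\infty$ in an essential way beyond writing the integral form of the quasinorm, so it is clean across the stated range.
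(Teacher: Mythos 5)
Your proof is correct and is essentially the paper's own argument: both rest on the change of variables $t\mapsto 2t$ (producing the sharp factor $2^{1/p}$), the diagonal case $\mu_{2t}(xy)\le\mu_t(x)\mu_t(y)$ of Lemma~\ref{lem:properties of Lorentz spaces}(1), and the pointwise bound $\mu_t(x)\le t^{-1/p_0}\Vert x\Vert_{p_0,\infty}$, combined via the index identity $1/p-1/p_0=1/p_1$. The only difference is cosmetic bookkeeping --- you extract the weak-norm supremum before integrating, while the paper keeps $t^{1/p_0}\mu_t(x)\cdot t^{1/p_1}\mu_t(y)$ inside the integral and bounds the first factor at the end.
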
 
	
	\begin{proof}
		From Lemma \ref{lem:properties of Lorentz spaces}(1) and the definition of $\|\cdot\|_{p_0,\infty}$, it follows that
		\begin{align*}
		\Vert xy\Vert_{p,q}
		&=\left(\int_{0}^{\infty}\left(t^{\frac{1}{p}}\mu_t(xy)\right)^q\frac{dt}{t}\right)^{\frac{1}{q}}\\
		&=2^{\frac{1}{p}}\left(\int_{0}^{\infty}\left(t^{\frac{1}{p}}\mu_{2t}(xy)\right)^q\frac{dt}{t}\right)^{\frac{1}{q}}\\
		&\le 2^{\frac{1}{p}}\left(\int_{0}^{\infty}\left(t^{\frac{1}{p_0}}\mu_t(x)\cdot t^{\frac{1}{p_1}}\mu_t(y)\right)^q\frac{dt}{t}\right)^{\frac{1}{q}}\\
		&=2^{\frac{1}{p}}\Vert x\Vert_{p_0,\infty}\Vert y\Vert_{p_1,q}.\qedhere
		\end{align*}
	\end{proof}

	\subsection{Locally compact quantum groups}
	In this subsection we recall the definition of locally compact quantum groups in the sense of Kustermans and Vaes \cite{KustermansVaes2000lcqg,KV03}. See also the notes \cite{Caspers2017notes}. We shall mainly work with the von Neumann algebraic version. For any $n.s.f.$ weight $\varphi$ on a von Neumann algebra $\M$, we set
	\begin{equation*}
	\n_{\varphi}:=\{x\in\M:\varphi(x^*x)<\infty\},~~\m_{\varphi}:=\n_{\varphi}^{*}\n_{\varphi}.
	\end{equation*}
	A \emph{locally compact quantum group} $\G=(\M,\Delta,\varphi,\psi)$ consists of
	\begin{enumerate}
		\item a von Neumann algebra $\M$;
		\item a normal, unital, *-homomorphism $\Delta:\M\to\M\overline{\otimes}\M$ such that $$(\Delta\otimes \id)\Delta=(\id\otimes\Delta)\Delta;$$
		\item a $n.s.f.$ weight $\varphi$ which is left invariant $$\varphi[(\omega\otimes\id)\Delta(x)]=\varphi(x)\omega(1),~~\omega\in \M_*^+, x\in\m^+_\varphi;$$
		\item a $n.s.f.$ weight $\psi$ which is right invariant $$\psi[(\id\otimes\omega)\Delta(x)]=\psi(x)\omega(1),~~\omega\in \M_*^+,x\in\m^+_{\psi};$$
	\end{enumerate} 
	where $\overline{\otimes}$ denotes the von Neumann algebra tensor product and $\id$ denotes the identity map. The normal, unital, *-homomorphism $\Delta$ is called \textit{comultiplication} on $\M$, $\varphi$ is called \textit{left Haar weight} and $\psi$ is called \textit{right Haar weight}.
	
	\begin{example}\label{exp:locally compact group}
		Let $G$ be a locally compact group. Then $(L_\infty(G,\mu),\Delta,\mu,\nu)$ is a locally compact quantum group, where $\Delta:L_\infty(G,\mu)\to L_\infty(G,\mu)\overline{\otimes}L_\infty(G,\mu)\simeq L_\infty(G\times G,\mu\times \mu)$ is given by $\Delta(f)(s,t)=f(st),s,t\in G$, and $\mu,\nu$ are the left and right Haar measures on $G$, respectively.
	\end{example}
	
	Given a locally compact quantum group $\G=(\M,\Delta,\varphi,\psi)$, we now define its dual  $\widehat{\G}=(\widehat{\M},\widehat{\Delta},\widehat{\varphi},\widehat{\psi})$ that is also a locally compact quantum group. For this, we equip $\n_{\varphi}$ with the inner product 
	$$\langle x,y\rangle=\varphi(y^\ast x),$$
	and denote by $H_{\varphi}$ the induced Hilbert space after completion. For any $x\in\n_{\varphi}\subset\M$ we write $\Lambda_{\varphi}(x)$ for the corresponding element in $H_{\varphi}.$ For any $x\in \M$, $\pi_{\varphi}(x)$ denotes the bounded operator over $H_{\varphi}$ such that $\pi_{\varphi}(x)\Lambda_{\varphi}(y)=\Lambda_{\varphi}(xy)$.
	So $(H_{\varphi},\pi_{\varphi},\Lambda_{\varphi})$ is the GNS representation of $\varphi$. We omit the subscript $\varphi$ in the sequel whenever there is no ambiguity. Assume that $\M$ acts on $H_\varphi$ with its predual $\M_*$. The \textit{multiplicative unitary} of $\G$ is the unitary operator $W$ on $H_\varphi\otimes H_\varphi$ such that
	\[
	W^*(\Lambda(x)\otimes\Lambda(y))=(\Lambda\otimes\Lambda)(\Delta(y)(x\otimes 1)),~~x,y\in \n_\varphi.
	\]
	 It implements the comultiplication:
	\[
	\Delta(x)=W^\ast (1\otimes x)W,~~x\in\M.
	\]
	For any $\omega\in \M_*$, define
	\begin{equation}\label{eq:fourier representation}
	\lambda(\omega):=(\omega\otimes\id)W.
	\end{equation}
	Then the underlying von Neumann algebra of $\widehat{\G}$ is defined as $\widehat{\M}:=\lambda(\M_*)''\subset B(H_\varphi)$. The comultiplication of $\widehat{\G}$ is given by
	$$\widehat{\Delta}(x)=\widehat{W}(1\otimes x)\widehat{W}^*,~~x\in\widehat{\M},$$
	where $\widehat{W}=\Sigma W^*\Sigma$ is the multiplicative unitary on $\widehat{\G}$ with $\Sigma$ being the flip on $H_\varphi\otimes H_\varphi$, i.e. $\Sigma(\xi\otimes\eta)=\eta\otimes\xi$.
	
	To define the dual left Haar weights, set
	\begin{equation*}
	\I:=\{\omega\in\M_*:\exists C>0 \text{ such that } |\omega(x^*)|\leq C\Vert\Lambda(x)\Vert,~x\in\n_\varphi\}.
	\end{equation*}
	By the Riesz representation theorem, there exists unique $\xi(\omega)\in H_\varphi$ such that $$\omega(x^*)=\langle\xi(\omega),\Lambda(x)\rangle,~~x\in\n_\varphi.$$ 
	Then the \emph{dual left Haar weight} $\widehat{\varphi}$ is defined to be the unique $n.s.f.$ weight on $\widehat{\M}$ with the GNS representation $(H,\iota,\widehat{\Lambda})$ such that $\lambda(\I)$ is a $\sigma$-strong*-norm core for $\widehat{\Lambda}$ and $\widehat{\Lambda}(\lambda(\omega))=\xi(\omega)$ for all $\omega\in\I$.
	Thus we have 
	\begin{equation}\label{eq:Parseval's relation}
	\widehat{\varphi}(\lambda(\omega)^*\lambda(\omega))
	=\langle\widehat{\Lambda}(\lambda(\omega)),\widehat{\Lambda}(\lambda(\omega))\rangle,~~\omega\in\I.
	\end{equation}
	The \emph{dual right Haar weight} $\widehat{\psi}$ can be defined in a similar way, which we will not do here. Then $\widehat{\G}=(\widehat{\M},\widehat{\Delta},\widehat{\varphi},\widehat{\psi})$ forms a locally compact quantum group. Constructing the dual $\widehat{\widehat{\G}}$ of $\widehat{\G}$, the Pontryagin duality says $\widehat{\widehat{\G}}=\G$. Furthermore, we have $\widehat{\widehat{\Lambda}}=\Lambda$.
	
	In this paper we are interested in locally compact quantum groups $\G$ on which both left Haar weight $\varphi$ and dual left Haar weight $\widehat{\varphi}$ are tracial. We close this subsection with some examples of locally compact quantum groups of this type.
	
	\begin{example}[Unimodular Kac algebras]
		We refer to \cite{ES92kac} for more about Kac algebras. Here we only remark that \emph{unimodular Kac algebras} are locally compact quantum groups $\G=(\M,\Delta,\varphi,\psi)$  for which $\varphi=\psi$ is tracial. If a Kac algebra $\G$ is unimodular, then so is its dual $\widehat{\G}$ \cite[Proposition 6.1.4]{ES92kac}. We give more concrete examples in the following. 
	\end{example}
	
	\begin{example}[Locally compact unimodular groups]
		Let $G$ be a locally compact unimodular group with $\mu$ being left (also right) Haar measure. Then $\G=(L_{\infty}(G,\mu),\Delta,\mu,\mu)$ is a locally compact quantum group, as we have seen in Example \ref{exp:locally compact group}. Clearly, its left Haar weight is tracial. From \cite{Kunze1958fouriertransform}, on its dual quantum group $\widehat{\G}=(\widehat{G},\widehat{\Delta},\widehat{\mu},\widehat{\mu})$, the left (right) dual Haar weight is tracial. Here $\widehat{G}$ is the von Neumann algebra acting on $L_2(G,\mu)$ generated by all $\lambda(f),f\in L_1(G,\mu)$, where $\lambda(f)$ is the convolution operator: $\lambda(f)g=f*g,g\in L_2(G,\mu)$. The multiplicative unitary $W$ acts on $L_2(G,\mu)\otimes L_2(G,\mu)\simeq L_2(G\times G,\mu\times \mu)$ as
		$$WF(s,t)=F(s,s^{-1}t).$$
	\end{example}

	\begin{example}[Compact quantum groups of Kac type]
		A \emph{compact quantum group} is a locally compact quantum group $\G$ such that the left Haar weight is finite, i.e. $\varphi(1)<\infty$. This agrees with Woronowicz's definition of compact quantum groups \cite{Woronowicz1998notes}, which we shall now recall. A \emph{compact quantum group} consists of a pair $\mathbb{G}=(A,\Delta)$, where $A$ is a unital C*-algebra and $\Delta$ is a unital $^\ast$-homomorphism from $A$ to $A\otimes A$ such that 
		\begin{enumerate}
			\item $(\Delta\otimes\id)\Delta=(\id\otimes\Delta)\Delta$;
			\item $\{\Delta(a)(1\otimes b):a,b\in A\}$ and $\{\Delta(a)(b\otimes1):a,b\in A\}$ are linearly dense in $A\otimes A$.
		\end{enumerate}
		Here $A\otimes A$ is the minimal C*-algebra tensor product. Any compact quantum group admits a unique \emph{Haar state}, i.e. a state $h$ on $A$ that is both left and right invariant:
		\begin{equation*}
		(h\otimes\id)\Delta(a)=h(a)1=(\id\otimes h)\Delta(a),~~a\in A.
		\end{equation*}
		Consider an element $u\in A\otimes B(H)$ with $\dim H=n$. By identifying $A\otimes B(H)$ with $M_n(A)$ we can write $u=[u_{ij}]_{i,j=1}^{n}$, where $u_{ij}\in A$. The matrix $u$ is called an \textit{n-dimensional representation} of $\mathbb{G}$ if we have
		\[
		\Delta(u_{ij})=\sum_{k=1}^{n}u_{ik}\otimes u_{kj},~~i,j=1,\dots,n.
		\]
		A representation $u$ is called \textit{unitary} if $u$ is unitary as an element in $M_n(A)$, and \textit{irreducible} if the only matrices $T\in M_n(\mathbb{C})$ such that $uT=Tu$ are multiples of identity matrix. Two representations $u,v\in M_n(A)$ are said to be \textit{equivalent} if there exists an invertible matrix $T\in M_n(\mathbb{C})$ such that $Tu=vT$. Denote by $\Irr(\mathbb{G})$ the set of equivalence classes of irreducible unitary representations of $\mathbb{G}$. For each $\pi\in\Irr(\mathbb{G})$, denote by $u^\pi\in A\otimes B(H_\pi)$ a representative of the class $\pi$, where $H_\pi$ is the finite-dimensional Hilbert space on which $u^\pi$ acts. In the sequel we write $n_\pi=\dim H_\pi$. Denote $\Pol(\mathbb{G})=\text{span} \left\{u^\pi_{ij}:1\leq i,j\leq n_\pi,\pi\in\Irr(\mathbb{G})\right\}$. This is a dense subalgebra of $A$. 
		
		The dual of a compact quantum group $\G$ is a discrete quantum group $\widehat{\G}=(\widehat{A},\widehat{\Delta},\widehat{h}_{\text{L}},\widehat{h}_{\text{R}})$, where $\widehat{A}$ is the $c_0$-direct sum of matrix algebras
		\begin{equation*}
		\widehat{A}=\bigoplus_{\pi\in\Irr(\G)}B(H_{\pi}).
		\end{equation*}
		The dual left Haar weight $\widehat{h}_{\text{L}}$ and dual right Haar weight $\widehat{h}_{\text{R}}$ are not the same in general. A compact quantum group $\G$ is of \emph{Kac type} if the Haar state $h$ is tracial. In this case $\widehat{h}_{\text{L}}$ and $\widehat{h}_{\text{R}}$ coincide, which we denote by $\widehat{h}$ for short. It takes the following form
		\begin{equation*}
		\widehat{h}(a)=\sum_{\pi\in\Irr(\G)}d_{\pi}\tr (a_{\pi}).
		\end{equation*}
		
		The multiplicative unitary $W$ of $\G$ is
		$$W:=\bigoplus_{\pi\in\Irr(\G)} u^{\pi}.$$
		Then the Fourier transform $\F$ over $\Pol(\G)$ is given by 
		\begin{equation*}
		\F(x)=(h(\cdot x) \otimes \id)W=(\widehat{x}(\pi))_{\pi\in\Irr(\G)},
		\end{equation*}
		where $\widehat{x}(\pi)=(h(\cdot x)\otimes \id)(u^{\pi}) $.
		
		Classical compact groups are certainly compact quantum groups of Kac type (the commutative case). In the next we give another family of such quantum groups (the cocommutative case). There are also compact quantum groups of Kac type which are neither commutative nor cocommutative, e.g. \emph{free orthogonal quantum groups $O_N^+$} \cite{Wan95} and \emph{free permutation quantum groups} $S_N^+$ \cite{Wan98}. We will not explain here in detail.
		
	\end{example}
	
	\begin{example}[Discrete group von Neumann algebras]
		Let $G$ be a discrete group. Then $\G=(\ell_{\infty}(G),\Delta,\mu,\mu)$ is a locally compact quantum group with $\mu$ being the counting measure. Suppose that $\{\delta_g\}_{g\in G}$ is the canonical basis of $\ell_2(G)$. Then the left regular representation of $G$ is given through $\lambda:G\to B(\ell_2(G)),\lambda_g(\delta_h)=\delta_{gh}$. The group von Neumann algebra $\widehat{G}$ is the von Neumann algebra generated by $\lambda(g),g\in G$ in $B(\ell_2(G))$. Thus the dual quantum group of $G$ is $\widehat{\G}=(\widehat{G},\widehat{\Delta},\tau,\tau)$, where $\tau$ is a normal faithful tracial state defined by $\tau(x)=\langle\delta_e,x\delta_e\rangle$, where $e$ is the unit of $G$ and $\langle\cdot,\cdot\rangle$ is the inner product on $\ell_2(G)$.
	\end{example}

	\section{Fourier transform on locally compact quantum groups }\label{sec:fourier_transform}
	In the remaining part of the paper, unless otherwise stated, for any $1<p<\infty$, $p'$ always denotes the conjugate number of $p$, i.e. $1/p+1/p'=1$. $\G=(\M,\Delta,\varphi,\psi)$ always denotes a locally compact quantum group with dual $\widehat{\G}=(\widehat{\M},\widehat{\Delta},\widehat{\varphi},\widehat{\psi})$, where $\varphi$ and $\widehat{\varphi}$ are both tracial. We shall use $L_{p}(\G,\varphi)$ and $L_{p,q}(\G,\varphi)$ to denote $L_{p}(\M,\varphi)$ and $L_{p,q}(\M,\varphi)$, respectively. The same goes to $L_{p}(\widehat{\G},\widehat{\varphi})$ and $L_{p,q}(\widehat{\G},\widehat{\varphi})$.
	
	\subsection{A brief history}
	In this section we briefly recall the history of Fourier transform on locally compact quantum groups and its definition in our setting.
	
	Let $G$ be a locally compact abelian group with Haar measure $\mu$, then the Fourier transform of $f\in L_{1}(G,\mu)$ takes the form:
	\[
	\F(f)(\xi)=\widehat{f}(\xi)=\int_{G}f(s)\overline{\xi(s)}d\mu(s),~~\xi\in\widehat{G}.
	\]
	By choosing the dual Haar measure $\widehat{\mu}$ on $\widehat{G}$ suitably, the map $L_{1}(G,\mu)\cap L_{2}(G,\mu)\ni f\mapsto \widehat{f}\in L_{2}(\widehat{G},\widehat{\mu})$ is isometric and can be extended to an isometry between $L_{2}(G,\mu)$ and $L_{2}(\widehat{G},\widehat{\mu})$. This defines the Fourier transform of $f\in L_{2}(G,\mu)$. The definition of the Fourier transform of $f\in L_{p}(G,\mu)$ follows from the the famous Hausdorff--Young inequality, which states that for any $1\leq p\leq 2$ we have 
	\begin{equation}\label{ineq:H-Y abelian}
	\Vert\widehat{f}\Vert_{L_{p'}(\widehat{G},\widehat{\mu})}\leq \Vert f\Vert_{L_{p}(G,\mu)},~~f\in L_{p}(G,\mu).
	\end{equation} 
	
	It is natural to ask what the Fourier transform looks like for general locally compact groups and whether we still have \eqref{ineq:H-Y abelian} or not. The first breakthrough is due to Kunze \cite{Kunze1958fouriertransform}, who observed the following fact. Let $G$ be a locally compact abelian group as above. Let $\lambda(f)$ denote the left regular representation of $f\in L_{1}(G,\mu)$ on $L_{2}(G,\mu)$, which is an operator given by
	\[
	(\lambda(f)g)(s):=f*g(s)=\int_{G}f(t)g(t^{-1}s)d\mu(t),~~g\in L_{2}(G,\mu).
	\]
	Denote by $L_{f}$ the operator on $L_{2}(G,\mu)$ given by multiplying $f$. Since $\F$ turns convolution into multiplication, we have
	\[
	\F(\lambda(f)g)=\F(f)\F(g)=L_{\F(f)}\F(g),~~f\in L_{1}(G,\mu),~g\in L_{2}(G,\mu).
	\]
	Recall that $\F$ is unitary on $L_{2}(G,\mu)$, so $\lambda(f)$ is unitarily equivalent to the operator $L_{\F(f)}$. This suggests us to use $\lambda(f)$ as a substitute of $\F(f)$. From this Kunze defined the Fourier transform on locally compact unimodular groups $(G,\mu)$ and generalized Hausdorff--Young inequalities \eqref{ineq:H-Y abelian} to locally compact unimodular groups. The dual of $G$, still denoted by $\widehat{G}$, is no longer a group, but can be studied via the von Neumann algebra generated by $\lambda(L_{1}(G,\mu))$ in $B(L_{2}(G,\mu))$. It turns out that there is a canonical  trace $\widehat{\mu}$ on $\widehat{G}$, so $L_{p'}(\widehat{G},\widehat{\mu})$ is constructed in the sense of Diximier and Segal. Terp \cite{Terp2017fouriertransform} extended this approach to locally compact non-unimodular groups $G$. Her Fourier transform for $f\in L_{p}(G,\mu)$ is the operator on $L_{2}(G,\mu)$ given by $\lambda(f)\Delta^{\frac{1}{p'}}$, where $\mu$ is the left Haar measure and $\Delta$ is the modular function on $G$. Remark that $\Delta$ here is understood as a multiplication operator by $\Delta$. The dual $\widehat{G}$ of $G$ is not necessarily equipped with a trace. In this context we also have Hausdorff--Young inequalities, where $L_{p'}(\widehat{G})$ is the noncommutative $L_p$-space in the sense of Hilsum \cite{Hilsum1981} and Connes \cite{Connes1980spatial}. Finally the Hausdorff--Young inequalities were extended to locally compact quantum groups by Cooney \cite{Cooney2010hausdorffyoung} and Caspers \cite{Caspers2013fouriertransform}.
	
	In this paper we are concerned with the locally compact quantum group case, but the associated left Haar weight and dual left Haar weight are both tracial. This makes the definition of $L_p$-Fourier transform much simpler than those of Cooney and Caspers. Indeed, we can embed our noncommutative $L_p$-space $L_p(\G,\varphi)$ ($1<p<2$) into $L_1(\G,\varphi)+L_2(\G,\varphi)$ in a natural way. So we will not recall their approaches here. 
	
	\subsection{Fourier transform and Hausdorff--Young inequalities}
	This subsection does not contain any new results. See for example \cite{Caspers2013fouriertransform}. We collect the proofs here for reader's convenience. 
	
	\begin{prop}\label{prop:intersection of L1 and L2}
		We have $L_1(\G,\varphi)\cap L_2(\G,\varphi)=\I$.
	\end{prop}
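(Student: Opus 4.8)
The plan is to prove the two inclusions of $\I = L_1(\G,\varphi)\cap L_2(\G,\varphi)$ separately, exploiting throughout that, because $\varphi$ is a trace, the GNS Hilbert space $H_\varphi$ is canonically identified with $L_2(\G,\varphi)$. Concretely, $\Lambda$ becomes the natural inclusion $\n_\varphi\hookrightarrow L_2(\G,\varphi)$, so that $\|\Lambda(x)\|=\|x\|_2$ and the GNS inner product $\langle\Lambda(x),\Lambda(y)\rangle=\varphi(y^\ast x)$ agrees with the $L_2$-pairing. I shall also use the identification $j\colon L_1(\G,\varphi)\to\M_*$, $j(y)=\varphi(y\,\cdot\,)$, the inclusion $\S\subseteq\n_\varphi$, and the density of $\S$ in $L_2(\G,\varphi)$.

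For the inclusion $L_1(\G,\varphi)\cap L_2(\G,\varphi)\subseteq\I$, I would take $y$ in the intersection and set $\omega=j(y)$. For $x\in\n_\varphi$, traciality gives $\omega(x^\ast)=\varphi(yx^\ast)=\varphi(x^\ast y)=\langle y,\Lambda(x)\rangle$, whence Cauchy--Schwarz yields $|\omega(x^\ast)|\le\|y\|_2\,\|\Lambda(x)\|$. Thus $\omega\in\I$ (with, moreover, $\xi(\omega)=y$).

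For the reverse inclusion $\I\subseteq L_1(\G,\varphi)\cap L_2(\G,\varphi)$, I would take $\omega\in\I\subseteq\M_*$, write $y=j^{-1}(\omega)\in L_1(\G,\varphi)$, and let $z=\xi(\omega)$, which lives in $H_\varphi=L_2(\G,\varphi)$. The defining property of $\xi(\omega)$ together with traciality then gives $\varphi(x^\ast z)=\omega(x^\ast)=\varphi(x^\ast y)$ for all $x\in\n_\varphi$, and in particular for all $x\in\S$. Setting $w=z-y\in L_1(\G,\varphi)+L_2(\G,\varphi)$, this says precisely that $w$ pairs to zero against every element of $\S$ through the trace.

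The one genuinely non-formal step is to conclude from this that $w=0$, i.e. the non-degeneracy of the trace pairing on $L_1+L_2$ against $\S$, and this is the step I expect to require the most care. I would argue it by polar decomposition: writing $w=u|w|$, if $w\neq0$ then for some $\delta>0$ the spectral projection $e=\chi_{(\delta,\infty)}(|w|)$ is nonzero, and since $w\in L_1+L_2$ one has $\mu_t(w)\to0$ and hence $\varphi(e)<\infty$, so that $x=eu^\ast\in\S$; then $\varphi(xw)=\varphi(e|w|)\ge\delta\,\varphi(e)>0$, a contradiction. Hence $y=z\in L_2(\G,\varphi)$, so $y\in L_1(\G,\varphi)\cap L_2(\G,\varphi)$, which completes the proof. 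All remaining points are routine bookkeeping once the identification $H_\varphi=L_2(\G,\varphi)$ and the formula $\Lambda(x)=x$ are in place.
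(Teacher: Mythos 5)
Your proof is correct and takes essentially the same route as the paper: under the tracial identification $H_\varphi=L_2(\G,\varphi)$, membership of $j(y)$ in $\I$ amounts to boundedness of the functional $x\mapsto\varphi(x^*y)$ on the dense subspace $\n_\varphi$ of $L_2(\G,\varphi)$, which is settled by $L_2$ self-duality. The paper compresses your two inclusions into a one-line appeal to ``duality of $L_p$-spaces'' plus density of $\n_\varphi$; your explicit $w=0$ step (Riesz representative equals $y$, proved via polar decomposition and the spectral projection $e=\chi_{(\delta,\infty)}(|w|)\in\S$) is exactly the non-degeneracy fact that appeal subsumes, and you carry it out correctly.
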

	
	This holds for general locally compact quantum groups and should be understood under suitable embedding of $\I$, $L_1(\G,\varphi)$ and $L_2(\G,\varphi)$ into some Banach space \cite[Theorem 3.3]{Caspers2013fouriertransform}. We give a proof here when $\varphi$ is tracial, which is the case this paper concerns with. In such case, $\I$ should be understood as $j^{-1}(\I)$, where $j:L_1(\G,\varphi)\to\M_*, x\mapsto \varphi(x\cdot)$ is the isometry map.
	
	\begin{proof}[Proof of Proposition \ref{prop:intersection of L1 and L2} when $\varphi$ is tracial]
		By definition,
		$$j^{-1}(\I)=\{y\in L_1(\G,\varphi):\exists C<\infty \textnormal{ such that }|\varphi(x^*y)|\le C\|x\|_{L_2(\G,\varphi)}, x\in \n_{\varphi} \}$$
		Note that $\n_{\varphi}$ is dense in $L_2(\G,\varphi)$, by duality of $L_p$-spaces, we have
		\begin{align*}
		j^{-1}(\I)
		&=\{y\in L_1(\G,\varphi):\exists C<\infty \textnormal{ such that }|\varphi(x^*y)|\le C\|x\|_{L_2(\G,\varphi)}, x\in L_2(\G,\varphi) \}\\
		&=\{y\in L_1(\G,\varphi):\|y\|_{L_2(\G,\varphi)}<\infty\}\\
		&=L_1(\G,\varphi)\cap L_2(\G,\varphi).\qedhere
		\end{align*}
	\end{proof}

	Recall that $L_1(\G,\varphi)$ is identified with $\M_*$ via the map $j(x)= \varphi_x$, where $\varphi_x:=\varphi(x\cdot).$  Since $W$ is unitary, from \eqref{eq:fourier representation} we have 
	\begin{equation*}
	\|\lambda(\varphi_x)\|_{L_\infty(\widehat{\G},\widehat{\varphi})}
	\le \|\varphi_x\|_{\M_*}=\|x\|_{L_1(\G,\varphi)},~~x\in L_1(\G,\varphi).
	\end{equation*}
	We define the $L_1$-Fourier transform as $\F_1:=\lambda\circ j:L_1(\G,\varphi)\to L_\infty(\widehat{\G},\widehat{\varphi}),x\mapsto \lambda(\varphi_x)$, then it is a contraction:
	\begin{equation*}
	\|\F_1(x)\|_{L_\infty(\widehat{\G},\widehat{\varphi})}
	\le \|x\|_{L_1(\G,\varphi)},~~x\in L_1(\G,\varphi).
	\end{equation*}
	For the $L_2$-Fourier transform, we firstly define it as $\F_1$ on the intersection of $L_1(\G,\varphi)$ and $\n_{\varphi}$.
	By Proposition \ref{prop:intersection of L1 and L2}, for any $x\in L_1(\G,\varphi)\cap \n_{\varphi}$, $\varphi_x$ belongs to $\I$. Note that by definition of $\widehat{\Lambda}$, we have
	$$\langle \Lambda(x),\Lambda(y)\rangle
	=\varphi(y^*x)
	=\varphi_x(y^*)
	=\langle\widehat{\Lambda}(\lambda(\varphi_x)),\Lambda(y) \rangle,~~y\in \n_{\varphi}.$$
	Since $\n_{\varphi}$ is dense in  $L_2(\G,\varphi)$, we have 
	\begin{equation}\label{eq:lambda}
	\widehat{\Lambda}(\lambda(\varphi_x))=\Lambda(x),~~x\in L_1(\G,\varphi)\cap \n_{\varphi}.
	\end{equation}
	From \eqref{eq:Parseval's relation} it follows that
	\begin{equation*}
	\widehat{\varphi}(\lambda(\varphi_x)^*\lambda(\varphi_x))
	=\langle\widehat{\Lambda}(\lambda(\varphi_x)),\widehat{\Lambda}(\lambda(\varphi_x))\rangle
	=\varphi(x^*x),~~x\in L_1(\G,\varphi)\cap \n_{\varphi}.
	\end{equation*}
	So we have 
	\begin{equation*}
	\|\F_2(x)\|_{L_2(\widehat{\G},\widehat{\varphi})}=\|x\|_{L_2(\G,\varphi)},~~x\in L_1(\G,\varphi)\cap \n_{\varphi}.
	\end{equation*}
	Since $L_1(\G,\varphi)\cap \n_{\varphi}$ is dense in $L_2(\G,\varphi)$, $\F_2$ can be extended to an isometry from $L_2(\G,\varphi)$ to $L_2(\widehat{\G},\widehat{\varphi})$, which we still denote by $\F_2$. 
	
	Now we may define an operator $\F$ on $L_1(\G,\varphi)+ L_2(\G,\varphi)$ as $\F(x)=\F_1(x_1)+\F_2(x_2)$, where $x=x_1+x_2$ with $x_i\in L_i(\G,\varphi),i=1,2$. One can check that it is well-defined and $\F|_{L_i(\G,\varphi)}=\F_i,i=1,2$. Thus the general $L_p$-Fourier transform $\F_p,1<p<2$, is defined to be the restriction of $\F$ to $L_p(\G,\varphi)\subset L_1(\G,\varphi)+ L_2(\G,\varphi)$. By complex interpolation, we have the \emph{Hausdorff--Young inequality}:
	\begin{equation}\label{ineq:HY}
	\|\F_p(x)\|_{L_{p'}(\widehat{\G},\widehat{\varphi})}\le \|x\|_{L_p(\G,\varphi)},~~x\in L_p(\G,\varphi),
	\end{equation}
	where $1\le p\le 2$ and $1/p+1/p'=1$. If we use real interpolation instead of complex interpolation, we get 
	\begin{equation}\label{ineq:stronger HY}
	\|\F(x)\|_{L_{p'}(\widehat{\G},\widehat{\varphi})}
	\precsim_{p} \|x\|_{L_{p,p'}(\G,\varphi)},~~x\in L_{p,p'}(\G,\varphi).
	\end{equation}
	Compared with \eqref{ineq:HY}, the constant $c_p$ in \eqref{ineq:stronger HY} is worse, but the space $L_{p,p'}(\G,\varphi)$ is larger than $L_{p}(\G,\varphi)$ when $1\le p<2$.
	
	\begin{defn}
		For any $x\in L_0(\G,\varphi)$, we call $m_x$ an \emph{$L_p$-$L_q$ Fourier multiplier} if the map $\F(y)\mapsto\F(xy)$ is well-defined and extends to bounded map from $L_p(\widehat{\G},\widehat{\varphi})$ to $L_q(\widehat{\G},\widehat{\varphi})$. One may also consider the map $\F(y)\mapsto\F(yx)$, which is similar.  
	\end{defn}

	\subsection{The dual/inverse Fourier transform}
	On the dual quantum group one can also define the Fourier transform $\widehat{\F}:L_1(\widehat{\G},\widehat{\varphi})+L_2(\widehat{\G},\widehat{\varphi})\to L_\infty(\G,\varphi)+L_2(\G,\varphi)$, whose restriction to $L_1(\widehat{\G},\widehat{\varphi})$ is $\widehat{\lambda}\circ\widehat{j}^{-1}$, where $\widehat{j}:L_1(\widehat{\G},\widehat{\varphi})\to \widehat{\M}_*,x\mapsto \widehat{\varphi}(\cdot x)$. Then $\widehat{\F}_2$ is the inverse of $\F_2$.

	\begin{prop}\label{prop:inverse fourier transform}
	Let $\G=(\M,\Delta,\varphi,\psi)$ be a locally compact quantum group with dual $\widehat{\G}=(\widehat{\M},\widehat{\Delta},\widehat{\varphi},\widehat{\psi})$. Then we have
	\begin{enumerate}
		\item [(1)] $\widehat{\F}(\F(x))=x,~~x\in L_2(\G,\varphi)$;
		\item [(2)] $\F(\widehat{\F}(a))=a,~~a\in L_2(\widehat{\G},\widehat{\varphi})$.
	\end{enumerate}
\end{prop}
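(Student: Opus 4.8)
The plan is to reduce everything to the common GNS Hilbert space $H=H_\varphi$, on which both Fourier transforms act as the identity; the proposition then follows by composing two explicit unitaries. Since $\varphi$ and $\widehat{\varphi}$ are tracial, the GNS maps extend to unitaries $\Lambda\colon L_2(\G,\varphi)\to H$ and $\widehat{\Lambda}\colon L_2(\widehat{\G},\widehat{\varphi})\to H$ (both $L_2$-spaces are, by construction, the completion of $\n_\varphi$, resp.\ $\n_{\widehat{\varphi}}$, under the $L_2$-norm, which is exactly the GNS space). The whole argument then amounts to identifying $\F_2$ and $\widehat{\F}_2$ through these unitaries.

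First I would record that $\F_2=\widehat{\Lambda}^{-1}\Lambda$. For $x$ in the dense subspace $L_1(\G,\varphi)\cap\n_\varphi$ of $L_2(\G,\varphi)$ one has $\F(x)=\lambda(\varphi_x)$, so \eqref{eq:lambda} reads $\widehat{\Lambda}(\F(x))=\Lambda(x)$. Both sides are continuous in $x\in L_2(\G,\varphi)$ because $\F|_{L_2}=\F_2$ is an isometry and $\Lambda,\widehat{\Lambda}$ are bounded; hence $\widehat{\Lambda}\circ\F_2=\Lambda$ on all of $L_2(\G,\varphi)$, that is $\F_2=\widehat{\Lambda}^{-1}\Lambda$.

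Next I would apply the same construction to the dual. Since $\widehat{\G}$ is again a locally compact quantum group with tracial left Haar weight $\widehat{\varphi}$, and since Pontryagin duality gives $\widehat{\widehat{\G}}=\G$ together with $\widehat{\widehat{\Lambda}}=\Lambda$, the verbatim analogue of \eqref{eq:lambda} on $\widehat{\G}$ yields $\widehat{\widehat{\Lambda}}(\widehat{\F}(a))=\widehat{\Lambda}(a)$ for $a$ in the dense subspace $L_1(\widehat{\G},\widehat{\varphi})\cap\n_{\widehat{\varphi}}$. Substituting $\widehat{\widehat{\Lambda}}=\Lambda$ and extending by continuity exactly as above gives $\Lambda\circ\widehat{\F}_2=\widehat{\Lambda}$, i.e.\ $\widehat{\F}_2=\Lambda^{-1}\widehat{\Lambda}$.

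Composing the two identities finishes the proof: $\widehat{\F}_2\circ\F_2=\Lambda^{-1}\widehat{\Lambda}\,\widehat{\Lambda}^{-1}\Lambda=\id$ on $L_2(\G,\varphi)$, which is (1), and $\F_2\circ\widehat{\F}_2=\widehat{\Lambda}^{-1}\Lambda\,\Lambda^{-1}\widehat{\Lambda}=\id$ on $L_2(\widehat{\G},\widehat{\varphi})$, which is (2); here I use that $\F|_{L_2}=\F_2$ and $\widehat{\F}|_{L_2}=\widehat{\F}_2$. The only genuinely non-formal step is the dual analogue of \eqref{eq:lambda}: it must be obtained by re-running the computation preceding \eqref{eq:lambda} for $\widehat{\G}$ and then invoking $\widehat{\widehat{\Lambda}}=\Lambda$. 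A small bookkeeping point to watch is the deliberate asymmetry between $j(x)=\varphi(x\cdot)$ and $\widehat{j}(x)=\widehat{\varphi}(\cdot x)$; traciality of the two weights is what makes these conventions compatible so that the inversion closes up.
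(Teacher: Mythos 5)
Your proposal is correct and follows essentially the same route as the paper: both rest on extending \eqref{eq:lambda} to $\widehat{\Lambda}\circ\F_2=\Lambda$ by density, invoking the dual analogue on $\widehat{\G}$ together with the Pontryagin identity $\widehat{\widehat{\Lambda}}=\Lambda$, and concluding by injectivity of the (extended) GNS maps. Your packaging of the argument as a composition of unitaries, $\F_2=\widehat{\Lambda}^{-1}\Lambda$ and $\widehat{\F}_2=\Lambda^{-1}\widehat{\Lambda}$, is merely a tidier restatement of what the paper does in-line.
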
 

\begin{proof}
	Note that the inclusion map $\Lambda:\n_{\varphi}\to H_{\varphi}=L_2(\G,\varphi)$ can be extended to the whole Hilbert space $L_2(\G,\varphi)$. We shall still use $\Lambda$ to denote its extension. The same goes to $\widehat{\Lambda}$. Recall that
	$$\langle \Lambda(x),\Lambda(y)\rangle
	=\langle \widehat{\Lambda}(\F(x)),\Lambda(y) \rangle,~~x\in L_1(\G,\varphi)\cap \n_{\varphi},y\in \n_{\varphi}.$$
	Since $L_1(\G,\varphi)\cap \n_{\varphi}$ is dense in $L_2(\G,\varphi)$, for any $x\in L_2(\G,\varphi)$ we may choose a net $\{x_{\alpha}\}_{\alpha}\subset L_1(\G,\varphi)\cap \n_{\varphi}$ such that $\lim\limits_{\alpha}x_{\alpha}=x$ in $\n Zn_{\varphi}$. $\F|_{\n_{\varphi}}$ is an isometry, so we obtain
	$$\langle \Lambda(x),\Lambda(y)\rangle
	=\lim\limits_{\alpha}\langle \Lambda(x_{\alpha}),\Lambda(y)\rangle
	=\lim\limits_{\alpha}\langle \widehat{\Lambda}(\F(x_{\alpha})),\Lambda(y) \rangle
	=\langle \widehat{\Lambda}(\F(x)),\Lambda(y) \rangle,~~y\in \n_{\varphi}.$$
    Hence $\widehat{\Lambda}(\F(x))=\Lambda(x), x\in L_2(\G,\varphi).$
    Since $\F(x)\in L_2(\widehat{\G},\widehat{\varphi})$ for all $x\in L_2(\G,\varphi)$, we have
	$$\Lambda(\widehat{\F}(\F(x)))=\widehat{\widehat{\Lambda}}(\widehat{\F}(\F(x)))=\widehat{\Lambda}(\F(x))=\Lambda(x),~~x\in L_2(\G,\varphi).$$
	Hence $\widehat{\F}(\F(x))=x$. This proves (1). The proof of (2) is similar.
\end{proof}

	Since $\widehat{\F}$ is the Fourier transform on $\widehat{\G}$, we have 
	\begin{equation*}
	\|\widehat{\F}(a)\|_{L_{\infty}(\G,\varphi)}
    \le \|a\|_{L_1(\widehat{\G},\widehat{\varphi})},~~a\in L_1(\widehat{\G},\widehat{\varphi}),
	\end{equation*}
	This, together with Proposition \ref{prop:inverse fourier transform}, yields
	\begin{equation}\label{ineq:contraction of inverse of F1}
	\|x\|_{L_{\infty}(\G,\varphi)}
	\le \|\F(x)\|_{L_1(\widehat{\G},\widehat{\varphi})},
	\end{equation}
	for all $x\in L_2(\G,\varphi)$ such that $\F(x)\in L_1(\widehat{\G},\widehat{\varphi})$, or equivalently, for all $x$ such that $\F(x)\in L_1(\widehat{\G},\widehat{\varphi})\cap L_2(\widehat{\G},\widehat{\varphi})$. Since $L_1(\widehat{\G},\widehat{\varphi})\cap L_2(\widehat{\G},\widehat{\varphi})$ is dense in $L_1(\widehat{\G},\widehat{\varphi})$, the map $\F(x)\mapsto x$ can be extended to a contraction from $L_1(\widehat{\G},\widehat{\varphi})$ to $L_{\infty}(\G,\varphi)$. Recall that 
	\begin{equation}\label{eq:isometry of F2}
	\|x\|_{L_2(\G,\varphi)}=\|\F(x)\|_{L_2(\widehat{\G},\widehat{\varphi})},~~x\in L_2(\G,\varphi).
	\end{equation}
	Combing \eqref{ineq:contraction of inverse of F1}, \eqref{eq:isometry of F2}, and applying real interpolation, we get
	\begin{equation}\label{ineq:stronger HY for inverse}
	\|x\|_{L_{p',p}(\G,\varphi)}
	\precsim_{p} \|\F(x)\|_{L_p(\widehat{\G},\widehat{\varphi})},
	\end{equation}
	for all $x$ such that $\F(x)\in L_p(\widehat{\G},\widehat{\varphi})$.
	
	\section{The proofs and examples}\label{sec:proof}
	
	\subsection{Fourier multipliers}
	This subsection is devoted to the proofs of our results for Fourier multipliers. Some examples will also be presented. In the following we shall simply use $\|\cdot\|_{p,q}$ to denote $\|\cdot\|_{L_{p,q}(\G,\varphi)}$ or $\|\cdot\|_{L_{p,q}(\widehat{\G},\widehat{\varphi})}$ whenever no ambiguity can occur. 
	
	\begin{proof}[Proof of Theorem \ref{thm:main reulst Lp-Lq multiplier}]
		Note that $1/q'=1/r+1/p'$. Then for any $x\in L_{r,\infty}(\G,\varphi)$ and $y\in L_1(\G,\varphi)+L_2(\G,\varphi)$ such that $\F(y)\in L_p(\widehat{\G},\widehat{\varphi})$, we have
		\begin{equation*}
		\|\F(xy)\|_q
		\stackrel{\eqref{ineq:stronger HY}}{\precsim_{q}} \|xy\|_{q',q}
		\stackrel{\eqref{ineq:Holder}}{\precsim_{q}} \|x\|_{r,\infty}\|y\|_{p',q}
		\stackrel{\eqref{ineq:lorentz space}}{\precsim_{p,q}} \|x\|_{r,\infty}\|y\|_{p',p}
		\stackrel{\eqref{ineq:stronger HY for inverse}}{\precsim_{p,q}} \|x\|_{r,\infty}\|\F(y)\|_{p}.\qedhere
		\end{equation*}
	\end{proof}

\begin{rem}\label{rem:Fourier-complex}
	From the proof, one can see that the result can be extended to the boundedness of Fourier multipliers between more general Lorentz spaces, which is beyond the aim of this paper. Also, if we use complex interpolation instead of real interpolation, i.e. the usual Hausdorff--Young inequalities, then one may get an upper bound of $\|x\|_r$ instead of $c_{p,q}\|x\|_{r,\infty}$. Details are provided for the Schur multipliers. See Remark \ref{rem:Schur-complex}.
\end{rem}

	\begin{proof}[Proof of Theorem \ref{thm:main result-Paley type}]
		Note that $1/p=1/p'+1/s$. For any $a\in L_{s,\infty}(\widehat{\G},\widehat{\varphi})$ and any $x\in L_p(\G,\varphi)$, we have 
		\begin{equation*}
		\|a\F(x)\|_{p}
		=\|a\F(x)\|_{p,p}
		\stackrel{\eqref{ineq:Holder}}{\precsim_{p}} \|a\|_{s,\infty}\|\F(x)\|_{p',p}
		\stackrel{\eqref{ineq:stronger HY}}{\precsim_{p}} \|a\|_{s,\infty}\|x\|_{p}.\qedhere
		\end{equation*}
	\end{proof}
	
	\begin{proof}[Proof of Theorem \ref{thm:main result-Lp multiplier}]
		This is a direct consequence of Theorem \ref{thm:main reulst Lp-Lq multiplier}. Indeed, since $h$ is a state, we have by H\"older's inequality that $\|x\|_{p}\le \|x\|_{q}$ whenever $x\in L_q(\G,h)$ and $p\le q$. Thus for any $1<p\le 2\le q<\infty$ we have 
		\begin{equation}\label{ineq:lp-lq discrete}
		\|\widehat{\F}(ab)\|_{p}
		\le \|\widehat{\F}(ab)\|_{q}
		\precsim_{p,q} \|a\|_{r,\infty}\|\widehat{\F}(b)\|_{p}
		\precsim_{p,q} \|a\|_{r,\infty}\|\widehat{\F}(b)\|_{q},
		\end{equation}
		for all $a\in \ell_{r,\infty}(\widehat{\G},\widehat{h})$ and $\widehat{\F}(b)\in L_q(\G,h)$. The first two inequalities of \eqref{ineq:lp-lq discrete} imply that $m_a$ is an $L_p$-Fourier multiplier:
		\begin{equation}\label{ineq:Lp-multiplier}
		\|m_a:L_p(\G,h)\to L_p(\G,h)\|\precsim_{p,q} \|a\|_{r,\infty},
		\end{equation}
		while the last two inequalities of \eqref{ineq:lp-lq discrete} imply that $m_a$ is an $L_q$-Fourier multiplier:
		\begin{equation}\label{ineq:Lq-multiplier}
		\|m_a:L_q(\G,h)\to L_q(\G,h)\|\precsim_{p,q} \|a\|_{r,\infty},
		\end{equation} 
		We may choose $q=2$ in \eqref{ineq:Lp-multiplier} and $p=2$ in \eqref{ineq:Lq-multiplier}. Hence for any $1<p<\infty$ we have
		\begin{equation*}
		\|m_a:L_p(\G,h)\to L_p(\G,h)\|\precsim_{p} \|a\|_{p^*,\infty},
		\end{equation*}
		with $1/p^*=|1/2-1/p|$.
	\end{proof}

\begin{rem}
	The index $r$ in Theorem \ref{thm:main reulst Lp-Lq multiplier} is sharp in general. To see this, take $\G=\Z$ with $\widehat{\G}=\T$. By Theorem \ref{thm:main reulst Lp-Lq multiplier} we have for $1<p\le 2\le q<\infty$ that
	\begin{equation}\label{ineq:Lp-Lq multiplier on T}
	\|m_{\phi}:L_p(\T)\to L_q(\T)\|\precsim_{p,q} \|\phi\|_{\ell_{r,\infty}(\Z)},
	\end{equation}
	where $1/r=1/p-1/q$. Indeed, by \cite[Lemma 6.6, page 129, Vol. II]{Zygmund02}, for any $1<p<\infty$ and Fourier series 
	$$f(x):=\sum_{n=1}^{\infty}a_n \cos (nx)=\frac{1}{2}\sum_{n\in \Z}a_{|n|}e^{inx},$$
	such that $a_n\downarrow 0$ as $n\to\infty$, we have 
	\begin{equation}\label{characterization of function in Lp}
	f\in L_p(\T)\textnormal{ if and only if } \sum_{n\ge 1}n^{p-2}a^p_n<\infty.
	\end{equation}
	Now suppose that $r$ in \eqref{ineq:Lp-Lq multiplier on T} can  be replaced by some $s>r$. Consider $\phi(n):=|n|^{-\frac{1}{s}},n\neq 0$ and $\phi(0):=0$. It is easy to see that $\phi\in \ell_{s,\infty}(\Z)\setminus \ell_{r,\infty}(\Z)$. Set $\alpha:=1/r-1/s>0$ and $a_n:=n^{\frac{1}{p}-1-\alpha}$. Since 
	$$p-2+p\left(\frac{1}{p}-1-\alpha\right)=-1-p\alpha<-1,$$
	$$q-2+q\left(\frac{1}{p}-1-\alpha-\frac{1}{s}\right)=q-2+q\left(\frac{1}{q}-1\right)=-1,$$
	we have  
	$$\sum_{n\ge 1}n^{p-2}a^p_n=\sum_{n\ge 1}n^{-1-p\alpha}<\infty,~~
	\sum_{n\ge 1}n^{q-2}(a_n \phi(n))^q=\sum_{n\ge 1}n^{-1}=\infty.$$
	By \eqref{characterization of function in Lp}, $f\in L_p(\T)$ while $m_{\phi}(f)\notin L_q(\T)$, which leads to a contradiction. So $r$ is sharp.
\end{rem}

\begin{rem}\label{rem:endpoint}
	The result of Corollary \ref{cor:group vna}  may fail in the endpoint case $p=1$. I am very grateful to \'Eric Ricard for pointing this out to me, and allowing me to include his proof here.  Take $G=\mathbb{Z}$ and $\widehat{G}=\mathbb{T}$. Then there exists $\phi:\mathbb{Z}\to \mathbb{R}$ such that $\phi\in\ell_{2,\infty}(\mathbb{Z})$ while the Fourier multiplier $m_\phi$ is unbounded on $L_1(\mathbb{T})$. To see this, take 
	\begin{equation*}
	\phi(n)=
	\begin{cases}
	\frac{1}{\sqrt{k}}&n=2^k, k\ge 1\\
	0&\text{otherwise}
	\end{cases}.
	\end{equation*}
	Clearly $\phi\in\ell_{2,\infty}(\mathbb{Z})\setminus \ell_2(\mathbb{Z})$. Suppose that the Fourier multiplier $m_\phi$  is bounded over $L_1(\mathbb{T})$. Then there exist a measure $\mu$ on $\mathbb{T}$ such that $m_\phi(f)=\mu\ast f$, with $\ast$ being the convolution. Since 
	$$\widehat{\mu}(n)=\phi(n)=0,~~n<0,$$
	by F. and M. Riesz theorem \cite[Theorem 17.13, page 341]{Rudin87real}, $\mu$ is absolutely continuous with respect to the Lebesgue measure $d\theta$. So $m_\phi$ is a convolution operator, i.e. $m_\phi(f)=h\ast f$ for some $h\in L_1(\mathbb{T})$ such that $\widehat{h}=\phi$. By construction, $\phi$ is supported on a Lacunary set \{$2^k,k\ge 1$\}.  Hence we have \cite[Theorem 3.6.4]{Grafakos2014classical}
	\begin{equation}\label{ineq:contradition}
	\|\phi\|_{\ell_2(\mathbb{Z})}=\|h\|_{L_2(\mathbb{T})}\le K\|h\|_{L_1(\mathbb{T})},
	\end{equation}
	for some constant $K>0$. However, the left hand side is unbounded as $\phi\notin \ell_2(\mathbb{Z})$. This leads to a contradiction. Therefore Corollary \ref{cor:group vna}  fails when $p=1$.
\end{rem}

	\begin{example}
		Let $G$ be a finitely generated group with the unit $e$ and a symmetric set $S$ of generators. By saying symmetric we mean $x^{-1}\in S$ whenever $x\in S$. Then it has an \emph{exponential growth}, i.e.,
		\begin{equation}\label{ineq:exponential growth}
		|\{x\in G:d(x,e)\le n\}|\le M^n,~~n\ge 1,
		\end{equation}
		for some $M>1$, where $d$ is the word metric on $G$ with respect to $S$ and $|\cdot|$ denotes the counting measure on $G$. Indeed, one can always choose $M$ to be $|S|$. Then for any $\phi:G\to\mathbb{C}$ such that $|\phi(g)|\le C M^{-\frac{|g|}{p^*}}$, where $|g|:=d(g,e)$ and $C>0$ is a constant. Then
		\begin{equation*}
		|\phi(g)|\ge \alpha \textnormal{ implies }|g|\le -p^*\log_{M} \frac{\alpha}{C},~~\alpha>0.
		\end{equation*}
		Therefore
		\begin{equation*}
		\alpha^{p^*}|\{g\in G:|\phi(g)|\ge \alpha\}|
		\le \alpha^{p^*} M^{-p^*\log_{M} \frac{\alpha}{C}} 
		\le C^{p^*}<\infty,~~\alpha>0,
		\end{equation*}
		and we have $\phi\in \ell_{p^*,\infty}(G)$, whence $m_{\phi}$ is an $L_p$-multiplier on $L_p(\widehat{G},\tau)$. For free group on $N$ generators $\mathbb{F}_N$, we may choose $S$ as the set consisting of $N$ generators with their inverses and let $M=2N$.
		
		If moreover, $G$ is of \emph{polynomial growth}, i.e. the right hand side of \eqref{ineq:exponential growth} can be replaced by some polynomial $p(n)$, or equivalently, $n^k$ for some $k>0$, then a similar argument yields that for any $\phi:G\to\mathbb{C}$ such that $|\phi(g)|\le C|g|^{-\frac{k}{p^*}}$, we have $\phi\in \ell_{p^*,\infty}(G)$, and then $m_{\phi}$ is an $L_p$-Fourier multiplier on $L_p(\widehat{G},\tau)$.
	\end{example}
	
	\subsection{Schur multipliers}
	In this subsection we prove Theorem \ref{thm:schur} for $\S_p$-$\S_q$ Schur multipliers. Recall that the Schatten $p$-class $\S_p(H)$ is the noncommutative $L_p$-space $L_p(B(H),\text{Tr})$ with Tr being the usual trace. For any set $X$, any $a=(a_{ij})_{i,j\in X}$ induces a \emph{Schur multiplier} $A$ given by $A(x_{ij})=(a_{ij}x_{ij})$. Here we are interested in $\S_p$-$\S_q$ boundedness of $A$. In the following we use $\|\cdot\|_p$ to denote the Schatten $p$-norms. Note first that we have
	\begin{equation}\label{ineq:schur1-infty}
	\|x\|_{\infty}\le \|x\|_{\ell_1(X\times X)},
	\end{equation}
	and 
	\begin{equation}\label{eq:paserval-schur}
	\|x\|_{2}=\|x\|_{\ell_2(X\times X)}.
	\end{equation}
	With \eqref{ineq:schur1-infty} and \eqref{eq:paserval-schur}, the complex interpolation gives 
	\begin{equation}\label{ineq:HY_Schur}
	\|x\|_{p'}\le \|x\|_{\ell_{p}(X\times X)},~~1< p< 2,
	\end{equation}
	while the real interpolation implies 
	\begin{equation}\label{ineq:stronger HY_Schur}
	\|x\|_{p'}\precsim_{p} \|x\|_{\ell_{p,p'}(X\times X)},~~1< p< 2.
	\end{equation}
	Similarly, from 
	\begin{equation}
	\|x\|_{\ell_\infty(X\times X)}\le \|x\|_{1},~~\|x\|_{\ell_2(X\times X)}=\|x\|_{2},
	\end{equation}
	we have by complex interpolation that 
	\begin{equation}\label{ineq:HY_Schur_dual}
	 \|x\|_{\ell_{p'}(X\times X)}\le \|x\|_{p},~~1< p< 2,
	\end{equation}
	and real interpolation that 
	\begin{equation}\label{ineq:stronger HY_Schur_dual}
	\|x\|_{\ell_{p',p}(X\times X)}\precsim_{p}\|x\|_{p},~~1< p< 2.
	\end{equation}
	
	\begin{proof}[Proof of Theorem \ref{thm:schur}]
		For Schur multipliers $A$ induced by $a=(a_{ij})_{i,j\in X}$, we have for any $x=(x_{ij})_{i,j\in X}\in \S_p(\ell_{2}(X))$ that 
		\begin{align*}
		\|Ax\|_q
		&\stackrel{\eqref{ineq:stronger HY_Schur}}{\precsim_{q}}\|(a_{ij}x_{ij})\|_{\ell_{q',q}(X\times X)}\\
		&\stackrel{\eqref{ineq:Holder}}{\precsim_{q}}\|a\|_{\ell_{r,\infty}(X\times X)}\|x\|_{\ell_{p',q}(X\times X)}\\
		&\stackrel{\eqref{ineq:lorentz space}}{\precsim_{p,q}}\|a\|_{\ell_{r,\infty}(X\times X)}\|x\|_{\ell_{p',p}(X\times X)}\\
		&\stackrel{\eqref{ineq:stronger HY_Schur_dual}}{\precsim_{p,q}}\|a\|_{\ell_{r,\infty}(X\times X)}\|x\|_{p}.\qedhere
		\end{align*}
	\end{proof}

	\begin{rem}\label{rem:Schur-complex}
		If we use complex interpolation instead of real interpolation, we get
		\begin{align*}
		\|Ax\|_q
		\stackrel{\eqref{ineq:HY_Schur}}{\le}\|(a_{ij}x_{ij})\|_{\ell_{q'}(X\times X)}
		\stackrel{\text{H\"older}}{\le}\|a\|_{\ell_{r}(X\times X)}\|x\|_{\ell_{p'}(X\times X)}
		\stackrel{\eqref{ineq:HY_Schur_dual}}{\le}\|a\|_{\ell_{r}(X\times X)}\|x\|_{p}.
		\end{align*}
	\end{rem}

	\subsection{Remarks}
	Our proof uses the following interpolation result: for $1\le p_0<p_1\le \infty,0<\theta <1$, and $1/p=(1-\theta)/p_0+\theta/p_1$, we have 
	\begin{equation*}
	(L_{p_0}(\M,\varphi),L_{p_1}(\M,\varphi))_{\theta,p}=L_p(\M,\varphi) ~~(\text{with equivalent norms}),
	\end{equation*}
	when $\varphi$ is a trace. However, when $\varphi$ is a weight, this fails in general (\cite[Section 3]{PisierXu2003LP}). That is why we assume the left Haar weight $\varphi$ and its dual $\widehat{\varphi}$ to be tracial. If we use complex interpolation instead of real interpolation, then one can still get an upper bound of $\|x\|_{L_{r}(\G,\varphi)}$ in Theorem \ref{thm:main reulst Lp-Lq multiplier} for general locally compact quantum groups. See Remarks \ref{rem:Fourier-complex} and \ref{rem:Schur-complex}. Certainly in this case the definition of Fourier multipliers is more involved. 
	
	 We end with the following interesting question. Let $1<p\le 2\le q<\infty$ and $1/r=1/p-1/q$. Suppose that $G$ is a locally compact non-unimodular group with $\mu$ being the left Haar measure. Let $\widehat{G}$ be the dual of $G$ with $\widehat{\varphi}$ being the dual left Haar weight. Then for the Fourier multiplier $m_\phi$ with the symbol $\phi\in L_{r,\infty}(G,\mu)$, do we have
	\begin{equation*}
	\|m_\phi: L_p(\widehat{G},\widehat{\varphi})\to L_q(\widehat{G},\widehat{\varphi})\|\precsim_{p,q}\|\phi\|_{L_{r,\infty}(G,\mu)} ? 
	\end{equation*}
	Here $\varphi=\mu$ is tracial, while $\widehat{\varphi}$ is not. One may choose various equivalent ways to define $L_p(\widehat{G},\widehat{\varphi})$, and the definition of Fourier multiplier $m_\phi$ needs to be suitably adapted accordingly. 
	
	\subsection*{Acknowledgement}
	Part of this project was finished during my PhD.  I am very grateful to my supervisor Professor Quanhua Xu for bringing me the topic and fruitful discussions. Part of this work was carried out during a visit to Caen. I would like to thank \'Eric Ricard for the invitation, warm hospitality, and helpful discussions. In particular, Remark \ref{rem:endpoint} is from him and I am very grateful for allowing me to include it here. Many thanks to Adam Skalski and Simeng Wang for helpful discussions and valuable comments. The research was partially supported by the NCN (National Centre of Science) grant 2014/14/E/ST1/00525, the French project ISITE-BFC (contract ANR-15-IDEX-03) and Lise Meitner fellowship, Austrian Science Fund (FWF) M3337.

\end{document}